\documentclass{elsarticle}
\usepackage{latexsym}
\usepackage{amsmath}
\usepackage{color} 
\usepackage{amssymb}
\usepackage{amsthm}
\usepackage{MnSymbol}
\usepackage{pgf,tikz}
\usepackage{caption}
\captionsetup{font=normalsize}

\usepackage{chngcntr}
\usepackage{lipsum}
\usepackage{color}

\usepackage{pstricks-add}

\usepackage{tikz}
\usetikzlibrary{shapes.geometric}
\usetikzlibrary{arrows.meta,arrows}

\usepackage{pgf,tikz}
\usetikzlibrary{arrows}

\theoremstyle{plain}
\newtheorem{thm}{Theorem}[section]
\newtheorem{lem}{Lemma}[section]

\newtheorem{rem}{Remark}[section]
\newtheorem{prop}{Proposition}[section]

\newtheorem{Fact}{Fact}[section]

\newtheorem{Notation}{Notation}[section]

\numberwithin{equation}{section}



\begin{document}
\nocite{*}
\sloppy

\newtheorem*{Convention}{Convention}

\makeatletter
\def\ps@pprintTitle{%
	\let\@oddhead\@empty
	\let\@evenhead\@empty
	\def\@oddfoot{\reset@font\hfil\thepage\hfil}
	\let\@evenfoot\@oddfoot
}
\makeatother

\begin{frontmatter}

\title{Primarily orientable graphs}
\author{Houmem Belkhechine\fnref{}}
\address{University of Carthage, Bizerte Preparatory Engineering Institute, Bizerte, Tunisia}
\ead{houmem.belkhechine@ipeib.rnu.tn}

    \begin{abstract} 
    A graph $G$ is primarily orientable if it is possible to orient its edges in such a way that the resulting oriented graph is prime, i.e., indecomposable under modular decomposition. We characterize primarily orientable graphs.
    	\end{abstract}
\begin{keyword}
Module \sep  Prime \sep Prime orientation \sep Stable module \sep Connected \sep Duo. 
\MSC[2020] 05C75 \sep  05C20 \sep 05C07. 
\end{keyword}
\end{frontmatter}
\section{Introduction and main result}
An oriented graph is an orientation of a graph $G$, that is, a digraph obtained from $G$ by orienting each of its edges, i.e., by replacing each edge $\{u,v\}$ of $G$ by exactly one of the arcs $(u,v)$ and $(v,u)$. Both graphs and oriented graphs are considered in this paper. 
Graph orientation problems are well studied under different considerations involving various aspects (algebraic, combinatorial, algorithmic, etc.). Such studies have led several authors to a variety of interesting results revealing connections between properties of graphs and their orientations. For example, various connections between colorations and orientations of graphs have been established by several authors (e.g., see \cite{QW}). The general problem consists of characterizing those graphs having an orientation satisfying a given property. One of the earliest results concerning this problem dates back to 1939, when Robbins \cite{Robbins} proved that the strongly orientable graphs, i.e. the graphs admitting a strongly connected orientation, are the $2$-edge-connected graphs. Nash-Williams \cite{Nash} and Boesch and Tindel \cite{Boesch} generalized Robbins' result in different contexts. Another example of particular interest is that of comparability graphs (or transitively orientable graphs), i.e., graphs admitting transitive orientations. A first characterization of such graphs was obtained by Ghouila-Houri \cite{Ghouila} in 1962. In 1967, Gallai \cite{G67} characrerized these graphs in terms of forbidden subgraphs. Gallai's paper, originally in German, was translated to English by Maffray and Preissmann in $2000$ \cite{MP01}. Bisides comparability graphs, it contains the earliest fundamental results on the topic of modular decomposition.   

In this paper, we characterize the graphs that have a prime orientation, i.e., an orientation that is indecomposable under modular decomposition. We call such graphs primarily orientable. 

We now formalize our presentation.
A {\it graph} $G=(V(G),E(G))$ (resp. a {\it digraph} $G=(V(G),A(G))$) consists of a finite set $V(G)$ of {\it vertices} together with a set $E(G)$ (resp. $A(G)$) of unordered (resp. ordered) pairs of distinct vertices, called {\it edges} (resp. {\it arcs}). We write $v(G)$ for $|V(G)|$. For distinct $ u,v\in V(G)$, we write $\overrightarrow{uv}$ and $uv$ for the ordered and unordered pairs $(u,v)$ and $\{u,v\}$, respectively. Given a graph (resp. a digraph) $G$, with each subset $X$ of $V(G)$, we associate the {\it subgraph} $G[X] =(X, \binom{X}{2} \cap E(G))$ (resp. the {\it subdigraph} $G[X] =(X, (X \times X) \cap A(G))$) of $G$ induced by $X$. For $X\subseteq V(G)$ (resp. $x\in V(G)$), the subgraph or subdigraph $G[V(G) \setminus X]$ (resp. $G[V(G) \setminus \{x\}$]) is simply denoted by $G - X$ (resp. $G - x$). Two graphs $G$ and $G'$ are {\it isomorphic} if there exists an {\it isomorphism} from $G$ onto $G'$, that is, a bijection $f$ from $V(G)$ onto $V(G')$ such that for every $x,y\in V(G)$, $xy\in E(G)$ if and only if $f(x)f(y)\in E(G')$. 
The graph $G$ is {\it complete} if $E(G)=\binom{V(G)}{2}$, it is {\it edgeless} if $E(G)= \varnothing$. An $n$-vertex complete graph is denoted by $K_n$. 

An {\it orientation} of a graph $G$ is a digraph $\overrightarrow{G}$ obtained from $G$ by replacing each edge $uv$ of $G$ by exactly one of the ordered pairs $\overrightarrow{uv}$ and $\overrightarrow{vu}$ as an arc of $\overrightarrow{G}$, in such a way that for distinct $x,y \in V(G) = V(\overrightarrow{G})$,

\begin{equation*}
|A(\overrightarrow{G}) \cap \{\overrightarrow{xy}, \overrightarrow{yx}\}| \ = \
\begin{cases}
1 \ \ \ \text{if} \ xy \in E(G),\\

0 \ \ \ \text{if} \ xy \notin E(G).
\end{cases}
\end{equation*} 
An {\it oriented graph} is an orientation of some graph. For example, a {\it tournament} is an orientation of a complete graph.

We now introduce the notion of primality which is based on that of a module. It is convenient to use the following notations. 
\begin{Notation} \normalfont
	Given a graph or a digraph $G$, for $X \subseteq V(G)$, we write $\overline{X}$ for $V(G) \setminus X$.
\end{Notation}

\begin{Notation} \normalfont \label{notation}
	Let $G$ be a graph (resp. an oriented graph). For distinct $u,v \in V(G)$, we set  
	\begin{equation*}
	G(u,v)= \
	\begin{cases}
	1 \ \ \ \ \text{if} \ \  uv \in E(G),\\
	
	0 \ \ \ \   \text{if} \ \ uv \notin E(G)
	\end{cases}
	\end{equation*} 
	(resp.
	\begin{equation*}
	G(u,v)= \
	\begin{cases}
	\ 1 \ \ \ \ \ \text{if} \ \  \overrightarrow{uv} \in A(G),\\
	-1 \ \ \ \ \hspace{0.07cm} \text{if} \ \  \overrightarrow{vu} \in A(G),\\
	
	\ 0 \ \ \ \ \  \hspace{0.07cm}  \text{otherwise}.)
	\end{cases}
	\end{equation*} 
	Let $X$ and $Y$ be two disjoint subsets of $V(G)$. The notation $X \equiv_G Y$ signifies that $G(x,y) = G (x',y')$ for every $x, x' \in X$ and $y, y' \in Y$. For more precision when $X \equiv_G Y$, we write $G(X,Y) =0$ (resp. $G(X,Y) =1$, $G(X,Y) =-1$) to indicate that for every $x \in X$ and $y \in Y$, we have $G(x,y) =0$ (resp. $G(x,y) =1$, $G(x,y) =-1$). When $X$ is a singleton $\{x\}$, we write $x \equiv_G Y$ for $\{x\} \equiv_G Y$, $G(x,Y)$ for $G(\{x\}, Y)$, and $G(Y,x)$ for $G(Y, \{x\})$. 
\end{Notation}

  Given a graph (resp. a digraph) $G$, a subset $M$ of $V(G)$ is a {\it module}  \cite{CH93, S92} (or a {\it homogeneous subset}~\cite{G67,MP01} or an {\it interval} \cite{ST}) of $G$  provided that for every $x,y\in M$ and for every $v\in \overline{M}$, $vx \in E(G)$ if and only if $vy \in E(G)$ (resp. $\overrightarrow{vx} \in A(G)$ if and only if $\overrightarrow{vy} \in A(G)$, and $\overrightarrow{xv} \in A(G)$ if and only if $\overrightarrow{yv} \in A(G)$). For example, by using Notation~\ref{notation}, when $G$ is a graph or an oriented graph, a subset $M$ of $V(G)$ is a module of $G$ if and only if for every $v \in \overline{M}$, we have $v \equiv_G M$, i.e., either $G(v,M) = 0$, $G(v,M) = 1$, or $G(v,M) = -1$.
  \begin{Notation} \normalfont
  	Given a graph or a digraph $G$, the set of modules of $G$ is denoted by $\text{mod}(G)$.
  \end{Notation} 
For example, it follows from the definition of modules, that
\begin{equation} \label{eqmodGGfleche}
\text{if} \ \overrightarrow{G} \ \text{is an orientation of a graph} \ G, \ \text{then} \ \text{mod}(\overrightarrow{G}) \subseteq \text{mod}(G).
\end{equation}

  Let $G$ be a graph or a digraph. The sets $V(G)$, $\varnothing$ and $\{v\}$ ($v \in V(G)$) are modules of $G$ called {\it trivial modules}. We say that $G$ is {\it indecomposable} \cite{ST}  if all its modules are trivial, otherwise we say that $G$ is {\it decomposable}. For example, the graphs and digraphs with at most two vertices are indecomposable. So let us say that a graph (or a digraph) is {\it prime} \cite{CH93} if it is indecomposable with at least three vertices. 
  
  A graph is {\it primarily orientable} if it admits a prime orientation. For instance, it follows from (\ref{eqmodGGfleche}) that every orientation of a prime graph is prime as well. In particular, prime graphs are primarily orientable. But primality is not a necessary condition for a graph to be primarily orientable. For example, for every integer $n \geq 5$, the complete graph $K_n$, which is obviously decomposable, is primarily orientable because there exist prime tournaments with $n$ vertices (e.g., see \cite{HB}). In fact, almost all tournaments are prime \cite{Erdos}. However, $K_4$ is not primarily orientable because the $4$-vertex tournaments are decomposable (e.g., see \cite{HC}).

  The main result of the paper consists of a characterization of primarily orientable graphs. In order to present this characterization, we need to introduce the following notions. A graph $G$ is {\it connected} if for every distinct $x, y \in V(G)$, there exists a sequence $v_0 =x, \ldots, v_n =y$ of vertices such that for every $i \in \{0, \ldots, n-1\}$, $v_iv_{i+1} \in E(G)$. A \textit{connected component} of the graph $G$ is a maximal subset $C$ of $V(G)$ such that $G[C]$ is connected. The connected components of $G$ form a partition of $V(G)$.
  \begin{Notation} \normalfont
  	Given a graph $G$, the set of the connected components of $G$ is denoted by $\mathcal{C}(G)$.
  \end{Notation} 
  Let $C \in \mathcal{C}(G)$. We have $G(C, \overline{C}) = 0$, and hence $C$ and $\overline{C}$ are modules of $G$. Similarly for every orientation $\overrightarrow{G}$ of $G$, $C$ and $\overline{C}$ are modules of $\overrightarrow{G}$ because $\overrightarrow{G}(C, \overline{C}) = G(C, \overline{C}) = 0$. Thus, given a graph $G$ with at least three vertices,
  \begin{equation} \label{eqconnecprimorient}
  \text{if the graph} \ G \ \text{is primarily orientable, then} \  G \ \text{is connected}.
  \end{equation}  
  But connectedness is not a sufficient condition for a graph to be primarily orientable. For example, for every integer $n \geq 3$, the graph $K_{1,n} = (\{0, \ldots, n\}, \{0i : i \in \{1, \ldots,n\}\})$, called a {\it star}, is connected but not primarily orientable.

  Let $G$ be a graph. A {\it stable set} of $G$ is a subset $S$ of $V(G)$ such that $|S| \geq 2$ and the induced subgraph $G[S]$ is edgeless. Let us say that a subset $S$ of $V(G)$ is a {\it stable module} (or a {\it s-module}) of $G$ if $S$ is a module of $G$ which is also a stable set of $G$.
  
  Given a vertex $x$ of the graph $G$, the {\it neighbourhood} of $x$ in $G$, denoted by $N_{G}(x)$, is the set of the vertices 
  $y\in V(G)$ such that $xy\in E(G)$. The {\it degree} of the vertex $x$ (in $G$) is  $d_G(x) = |N_{G}(x)|$.
  We extend this to nonempty modules in the following manner. Let $M$ be a nonempty module of the graph $G$. We define the {\it neighbourhood} of $M$ in $G$, denoted by $N_G(M)$, to be the set of the vertices $v \in \overline{M}$ satisfying one of the following equivalent conditions :
  \begin{itemize}
  	\item $vx \in E(G)$ for every $x \in M$,
  	\item $vx \in E(G)$ for some $x \in M$.
  \end{itemize}
The {\it degree} of $M$ (in $G$) is $d_G(M) = |N_{G}(M)|$.

Our characterization of primarily orientable graphs is provided by the following theorem, which is the main result of the paper. 
  
  \begin{thm} \label{thmprincipal}
  	Given a graph $G$ with at least three vertices, other than $K_4$, the graph $G$ is primarily orientable if and only if the following two conditions are satisfied.
  	\begin{enumerate}
  		\item The graph $G$ is connected.
  		
  		\item For every s-module $S$ of $G$, we have $d_G(S) \geq \log_2(|S|)$.
  	\end{enumerate}
  	\end{thm}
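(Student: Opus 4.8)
\emph{Necessity.} The first condition is immediate from (\ref{eqconnecprimorient}). For the second, I would fix a prime orientation $\overrightarrow{G}$ and an s-module $S$, and build a ``trace'' map $\Phi\colon S\to\{-1,1\}^{N_G(S)}$ by $\Phi(s)=(\overrightarrow{G}(v,s))_{v\in N_G(S)}$; this is well defined because, $S$ being a module, each $v\in N_G(S)$ is adjacent to every $s\in S$, so $\overrightarrow{G}(v,s)\in\{-1,1\}$. The heart of the argument is that $\Phi$ is injective: if $\Phi(s)=\Phi(s')$ for distinct $s,s'\in S$, then $\{s,s'\}$ is a module of $\overrightarrow{G}$, since $s,s'$ are nonadjacent ($S$ is stable), every $v\in N_G(S)$ is oriented the same way toward both, and every other vertex is nonadjacent to both ($S$ is a module or $S$ is stable); as $2\le|\{s,s'\}|<v(G)$, this nontrivial module contradicts primality. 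Injectivity gives $|S|\le 2^{d_G(S)}$, i.e.\ $d_G(S)\ge\log_2(|S|)$.

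\emph{Sufficiency: reduction via modular decomposition.} This is the substantial direction, and the plan is an induction on $v(G)$ driven by the modular decomposition of $G$. If $G$ is prime, any orientation is prime by (\ref{eqmodGGfleche}) and we are done. Otherwise let $\mathcal{P}=\{M_1,\dots,M_k\}$ ($k\ge2$) be the maximal proper (strong) modules, with quotient $Q=G/\mathcal{P}$; since $G$ is connected, $Q$ is either prime (with $k\ge3$) or complete ($Q=K_k$), the edgeless/parallel case being excluded. I would use the standard fact that every module of $G$ is either contained in a single part $M_i$ or is a union of parts corresponding to a module of $Q$. Thus making $\overrightarrow{G}$ prime amounts to three tasks: break every nontrivial module contained in a part, break each part $M_i$ with $|M_i|\ge2$ as a module of $\overrightarrow{G}$, and prevent any union of parts from being a module.

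\emph{Sufficiency: the orientation.} The crossing edges between two adjacent parts $M_i,M_j$ form a complete bipartite block that may be oriented freely, and I would encode such a block by a $\{-1,1\}$ matrix of arc directions. Within a non-stable part I would orient the internal edges recursively; a stable part $M_i$ carries no internal edges and must be handled entirely through its external neighbours $N_G(M_i)=\bigcup_{Q(m_i,m_j)=1}M_j$. The key point is that breaking $M_i$ and all its subsets (every subset of a stable module is again a module) is exactly the requirement that the vertices of $M_i$ receive pairwise distinct traces from $N_G(M_i)$, i.e.\ that an injection $M_i\hookrightarrow\{-1,1\}^{N_G(M_i)}$ be realized by the crossing arcs; condition~2, namely $|M_i|\le 2^{d_G(M_i)}$, provides exactly the bit budget that makes such an injection available. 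Realizing these injections simultaneously amounts to choosing the block matrices so that the relevant rows are distinct on one side and the relevant columns are distinct on the other, which the two-sided instances of condition~2 permit.

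\emph{Main obstacle.} The delicate part is coordination: the same crossing block that supplies distinguishing neighbours for $M_i$ also supplies them for $M_j$, so the chosen matrices must break both parts at once while, at the quotient level, no union of parts survives as a module. In the prime-quotient case the absence of nontrivial modules of $Q$ keeps the union-of-parts obligations light, whereas in the complete-quotient (join) case \emph{every} union of parts is a module of $G$, so the crossing orientation must in effect realize a prime tournament on the parts; this is precisely where the $K_4$ exception enters (the join of four singletons is $K_4$, whose tournaments are all decomposable), and I expect the careful bookkeeping of this degenerate case, together with verifying that condition~2 is inherited by the sub-instances feeding the induction, to be the hardest component of the argument.
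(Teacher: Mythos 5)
Your necessity argument is correct and is essentially the paper's own: your trace map $\Phi$ is the paper's injection $x\mapsto N^{+}_{\overrightarrow{G}}(x)$ into $2^{N_G(S)}$, with the same contradiction against primality. The gap is in the sufficiency direction, and it is a real one: the hypotheses of the theorem are \emph{not} inherited by the parts of the modular decomposition, so the step ``within a non-stable part I would orient the internal edges recursively'' fails. For an s-module $S$ contained in a part $M_i$ we have $N_G(S)=N_{G[M_i]}(S)\cup N_G(M_i)$, hence $d_{G[M_i]}(S)=d_G(S)-d_G(M_i)$, which can fall below $\log_2(|S|)$; the part may even be a star or be disconnected. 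Concretely, let $G$ be obtained from the prime path $P_3$ on $\{a,b,c\}$ (edges $ab$, $bc$) by substituting for $a$ the star $K_{1,4}$ with center $z$ and leaves $l_1,\dots,l_4$. Then $G$ is connected and its s-modules are exactly the sets of at least two leaves, each with neighbourhood $\{z,b\}$, so Condition 2 holds and $G$ is primarily orientable by the theorem. But the part $K_{1,4}$ is the paper's own example of a connected graph that is not primarily orientable: inside the part each leaf has the single neighbour $z$, so by pigeonhole no orientation of the internal edges alone can prevent two leaves from forming a module; the leaves can only be separated by the crossing arcs from $b$ acting \emph{jointly} with the internal arcs from $z$. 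This situation falls between the two clauses of your plan (recursion for non-stable parts, external injection for stable parts) and is covered by neither.

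Beyond this, the combinatorial core of your scheme is asserted rather than proved: that block matrices can simultaneously realize the required injections on both sides of each block while also destroying every union of parts in the join case (where a block may have to be non-uniform for one purpose and is thereby constrained for another). You yourself flag this coordination as ``the hardest component,'' but it is precisely the content of the theorem; also, your remark that the join case forces ``a prime tournament on the parts'' is only accurate when all parts are singletons (i.e.\ $G=K_4$ exactly), since non-uniform blocks can break unions of parts without any quotient tournament being prime. The paper avoids the coordination problem altogether by working bottom-up: it inducts on $|{\rm sduo}(G)|$, deletes one vertex $x$ from a maximum s-module $S$, verifies that connectivity, Condition 2, and the induction measure all pass to $G-x$ (unlike to the parts), and then re-inserts $x$ with $N^{+}_{\overrightarrow{G}}(x)\subseteq N_G(S)$ chosen, by the same counting idea you use in the necessity direction, to differ from every trace $N^{+}(v)$, $v\in S\setminus\{x\}$; primality of the extension is then settled by the one-vertex-extension partition of Lemma~\ref{lemER}, with Proposition~\ref{propsduofree} providing a separate induction for the sduo-free base case. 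To salvage your top-down approach you would need a strengthened induction hypothesis allowing prescribed external arcs to participate in breaking a part's internal modules, which is exactly what your current formulation lacks.
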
    
  
  \section{Preliminary results}

  \begin{lem} \label{lemrem}
  	Given a connected graph $G$, if $M$ is a nontrivial module of $G$, then for every vertex $v \in M$, the graph $G-v$ is connected as well.
  \end{lem}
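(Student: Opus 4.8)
The plan is to fix a single target vertex of $G-v$ and to show that every other surviving vertex can reach it by a walk avoiding $v$. Since $M$ is nontrivial, we have $2 \le |M| \le v(G)-1$, so I may pick a vertex $u \in M \setminus \{v\}$. Moreover, because $G$ is connected while $\varnothing \ne M \subsetneq V(G)$, there must be at least one edge between $M$ and $\overline{M}$; equivalently $N_G(M) \ne \varnothing$. Fix $w \in N_G(M)$. By the module property, $w$ is adjacent to every vertex of $M$, hence to $u$ and indeed to every vertex of $M \setminus \{v\}$; note also that $w \ne v$, so $w$ survives in $G-v$. This vertex $w$ will serve as the common anchor.

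Next I would show that every $z \in V(G) \setminus \{v\}$ is joined to $w$ in $G-v$. Since $G$ is connected, choose a walk $P$ from $z$ to $w$ in $G$. If $P$ avoids $v$, it already lies in $G-v$ and we are done. Otherwise we may assume $v$ occurs on $P$, and since the endpoints $z$ and $w$ both differ from $v$, it occurs with a predecessor along $P$; let $a$ denote the vertex immediately preceding the first occurrence of $v$, so that $av \in E(G)$ and the initial portion of $P$ from $z$ to $a$ avoids $v$.

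The crux is to bypass $v$ using the module structure, splitting on the location of $a$. If $a \in M$, then $a \in M \setminus \{v\}$, so $a$ is adjacent to $w$, and the initial segment from $z$ to $a$ followed by the edge $aw$ is a walk from $z$ to $w$ in $G-v$. If instead $a \in \overline{M}$, then $a$ is adjacent to $v \in M$, so $a \in N_G(M)$, whence $a$ is adjacent to $u$; the initial segment from $z$ to $a$ followed by $a \to u \to w$ (both edges present and avoiding $v$, since $u, w \ne v$) again gives a walk from $z$ to $w$ in $G-v$. In every case $z$ reaches $w$ in $G-v$, so $G-v$ is connected.

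The only genuine difficulty is the intra-module situation, in which the deleted vertex $v$ is flanked on the walk by vertices of $M$ itself; deleting $v$ could a priori disconnect $M$. This is precisely what the existence of the common external neighbour $w$ resolves: every surviving vertex of $M$ is adjacent to $w$, so $(M \setminus \{v\}) \cup \{w\}$ remains connected and the detour above always closes up. The remaining verifications are routine once $w$ is fixed.
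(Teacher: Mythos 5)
Your proof is correct, but it takes a different route from the paper's. The paper argues by contradiction: assuming $G-v$ is disconnected, it picks in each connected component $C$ of $G-v$ a neighbour $v_C$ of $v$ (these exist because $G$ is connected), shows via the module property that the vertices $v_C$ must lie either all inside $M$ or all outside $M$, and derives a contradiction in each case --- in the first case by playing the $v_C$ against a vertex of $N_G(M)$, in the second by playing them against a vertex of $M \setminus \{v\}$. Your argument is instead direct and constructive: you fix the same two auxiliary vertices ($w \in N_G(M)$ and $u \in M \setminus \{v\}$) and explicitly reroute an arbitrary walk from $z$ to $w$ around the first occurrence of $v$, splitting on whether the predecessor $a$ lies in $M$ (then $a$ is adjacent to $w$) or in $\overline{M}$ (then $a \in N_G(M)$, hence $a$ is adjacent to $u$, which is adjacent to $w$). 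The underlying structural facts are identical in both proofs --- every neighbour of $v$ either lies in $M$ and is then adjacent to the common external neighbour $w$, or lies in $N_G(M)$ and is then adjacent to all of $M \setminus \{v\}$ --- so the two arguments are essentially dual presentations of one idea. What yours buys is transparency: it exhibits $w$ as a hub to which every surviving vertex is explicitly joined, rather than extracting a contradiction from a hypothetical disconnection. What the paper's version buys is lighter bookkeeping: it never manipulates walks, only adjacency values $G(\cdot,\cdot)$ between representatives of distinct components.
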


\begin{proof}
	Let $G$ be a connected graph, let $M$ be a nontrivial module of $G$, and let $v \in M$. Since $G$ is connected, we have 
	\begin{equation} \label{eqd}
	d_G(M) \geq 1.
	\end{equation}
	Moreover, for every $C \in \mathcal{C}(G-v)$, there exists a vertex $v_C \in C$ such that $vv_C \in E(G)$. Suppose for a contradiction that $G-v$ is not connected. If there exists $C \neq C' \in \mathcal{C}(G-v)$ such that $v_C \in M$ and $v_{C'} \notin M$, then $G(v_C,v_{C'}) = 0 \neq G(v, v_{C'}) =1$, which contradicts that $M$ is a module of $G$. Therefore, either $\{v_C : C \in \mathcal{C}(G-v)\} \subseteq M$ or $M \cap \{v_C : C \in \mathcal{C}(G-v)\} = \varnothing$. First suppose $\{v_C : C \in \mathcal{C}(G-v)\} \subseteq M$. Let $u \in N_G(M)$ (see (\ref{eqd})), and let $C$ be the connected component of $G-v$ containing $u$. For $C' \in \mathcal{C}(G-v) \setminus \{C\}$, we obtain $G(u,v_{C'}) = 0 \neq G(u,v) = 1$, which contradicts that $M$ is a module of $G$. Second suppose $M \cap \{v_C : C \in \mathcal{C}(G-v)\} = \varnothing$. Let $w \in M \setminus \{v\}$ and let $C$ be the connected component of $G-v$ containing $w$. For $C' \in \mathcal{C}(G-v) \setminus \{C\}$, we obtain $G(v_{C'},w) =0 \neq G(v_{C'},v) = 1$, which again contradicts that $M$ is a module of $G$.  
\end{proof}
  
Given a graph $G$, a {\it duo} of $G$ is a module of cardinality 2 of $G$. We say that a subset $X$ of $V(G)$ is a {\it stable duo} (or a {\it s-duo}) of $G$ if $X$ is a duo of $G$ which is also a stable set of $G$.

\begin{Notation} \normalfont
	Given a graph $G$, the set of s-modules (resp. s-duos) of $G$ is denoted by $\text{smod}(G)$ (resp. $\text{sduo}(G)$).
\end{Notation} 
The next remarks follow immediately from the definitions of s-modules and s-duos. 

\begin{rem} \normalfont \label{remsubsmodule}
	If $S$ is a s-module of a graph $G$, then every subset $S'$ of at least two elements of $S$ is also a s-module of $G$.
\end{rem}

\begin{rem} \normalfont \label{remunionsmodules}
	Given two s-modules $S$ and $S'$ of a graph $G$, if $S \cap S' \neq \varnothing$, then $S \cup S'$ is also a s-module of $G$.
\end{rem}
\begin{rem} \normalfont \label{rem}
	For every graph $G$, we have $\cup \text{smod}(G) = \cup \text{sduo}(G)$.
\end{rem} 
\begin{lem} \label{leminclustrict}
	Given a graph $G$, for every $x \in \cup {\rm smod}(G) = \cup {\rm sduo}(G)$ (see Remark~\ref{rem}), we have ${\rm sduo}(G-x) \varsubsetneq {\rm sduo}(G)$.     
\end{lem}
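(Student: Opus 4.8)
The plan is to establish the stated proper inclusion by proving its two halves separately: first the containment $\text{sduo}(G-x) \subseteq \text{sduo}(G)$, and then strictness. By Remark~\ref{rem}, the hypothesis $x \in \cup\,\text{smod}(G)$ is equivalent to $x \in \cup\,\text{sduo}(G)$, so I may fix a vertex $y$ with $\{x,y\} \in \text{sduo}(G)$. This $y$ drives the whole argument: since $\{x,y\}$ is a module of $G$, the vertices $x$ and $y$ are interchangeable with respect to every other vertex, and since it is stable we also have $G(x,y)=0$.

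For the containment, I would take an arbitrary $\{a,b\} \in \text{sduo}(G-x)$ and show it lies in $\text{sduo}(G)$. Stability is immediate because $G$ and $G-x$ induce the same edges on $\{a,b\}$, so the only point to check is that $\{a,b\}$ is a module of $G$. As $\{a,b\}$ is already a module of $G-x$, every $v \in V(G)\setminus\{a,b,x\}$ satisfies $v \equiv_G \{a,b\}$ (the relevant adjacencies agree in $G$ and $G-x$), so the claim reduces to verifying $x \equiv_G \{a,b\}$, i.e. $G(x,a)=G(x,b)$. I would split into two cases according to whether $y \in \{a,b\}$.

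In the case $y \notin \{a,b\}$, I chain three equalities: $G(x,a)=G(y,a)$ and $G(x,b)=G(y,b)$ because $\{x,y\}$ is a module of $G$ with $a$ and $b$ external to it, while $G(y,a)=G(y,b)$ because $\{a,b\}$ is a module of $G-x$ and $y$ is external to it; combining gives $G(x,a)=G(x,b)$. In the case $y \in \{a,b\}$, say $\{a,b\}=\{y,b\}$, the vertex $y$ is no longer external to the duo, so the generic chaining breaks down and I instead exploit stability: $G(x,y)=0$ because $\{x,y\}$ is stable, and $G(x,b)=G(y,b)=0$ because $b$ is external to the module $\{x,y\}$ (forcing $G(x,b)=G(y,b)$) and $\{y,b\}$ is stable (forcing $G(y,b)=0$); hence $G(x,y)=G(x,b)=0$ and $x \equiv_G \{y,b\}$.

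Finally, strictness is immediate: the s-duo $\{x,y\}$ belongs to $\text{sduo}(G)$ but contains $x$, whereas every member of $\text{sduo}(G-x)$ is a two-element subset of $V(G)\setminus\{x\}$ and therefore cannot contain $x$. Thus $\{x,y\} \in \text{sduo}(G) \setminus \text{sduo}(G-x)$, making the inclusion proper. The only genuine obstacle is the containment step, and within it the case $y \in \{a,b\}$, where the interchangeability argument alone is insufficient and one must bring in stability of both duos.
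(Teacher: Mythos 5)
Your proposal is correct and follows essentially the same argument as the paper: the same reduction to checking $G(x,a)=G(x,b)$, the same two cases according to whether the witness $y$ lies in the duo (chaining module equalities in one case, invoking stability of both duos in the other), and the same witness $\{x,y\}$ for strictness. No gaps.
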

\begin{proof}
	Let $G$ be a graph and let $x \in \cup \text{sduo}(G)$. Let $uv \in \text{sduo}(G-x)$, where $u \neq v \in V(G-x)$. To prove that $uv \in \text{sduo}(G)$, it suffices to show that $G(x,u) = G(x,v)$. Consider a vertex $y \in V(G-x)$ such that $xy \in \rm{sduo}(G)$. First suppose $y \notin uv$. We have $G(y,u) = G(y,v)$ because $uv$ is a module of $G-x$. In addition, $G(y,u) = G(x,u)$ and $G(y,v) = G(x,v)$ because $xy$ is a module of $G$. Thus $G(x,u) = G(x,v)$. Second suppose $y \in uv$. We may assume $y=u$. In this instance, $G(x,u) = G(u,v) =0$ because $xu$ and $uv$ are stable sets of $G$. Moreover, since $G(u,v) =0$ and $xu$ is a module of $G$, we obtain $G(x,v) =0$. Thus $G(x,u) = G(x,v)$. We conclude that $\text{sduo}(G-x) \subseteq \text{sduo}(G)$. Since $xy \in \text{sduo}(G) \setminus \text{sduo}(G-x)$, it follows that $\text{sduo}(G-x) \varsubsetneq \text{sduo}(G)$.   
\end{proof} 
Let us say that a graph $G$ is {\it sduo-free} when $\text{sduo}(G) = \varnothing$.

\begin{lem} \label{lemsduo}
	Given a sduo-free graph $G$, for every $v \in V(G)$, the elements of ${\rm sduo}(G-v)$ are pairwise disjoint. 
\end{lem}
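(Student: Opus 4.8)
The plan is to argue by contradiction. Suppose the elements of $\text{sduo}(G-v)$ are not pairwise disjoint, so that there exist two distinct s-duos $D_1, D_2 \in \text{sduo}(G-v)$ with $D_1 \cap D_2 \neq \varnothing$. Since $D_1$ and $D_2$ are distinct two-element sets that meet, they share exactly one vertex, so I may write $D_1 = \{a,b\}$ and $D_2 = \{a,c\}$ with $a, b, c$ pairwise distinct. As s-duos are in particular s-modules of $G - v$ and $D_1 \cap D_2 = \{a\} \neq \varnothing$, Remark~\ref{remunionsmodules} gives that $\{a,b,c\} = D_1 \cup D_2$ is a s-module of $G-v$. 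In particular $\{a,b,c\}$ is a stable set of $G-v$ (hence of $G$) and a module of $G-v$, so every $w \in V(G) \setminus \{v,a,b,c\}$ satisfies $w \equiv_G \{a,b,c\}$.

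Next I would pass from $G - v$ back to $G$ by examining how $v$ attaches to $\{a,b,c\}$. Each of $G(v,a), G(v,b), G(v,c)$ equals $0$ or $1$, so by the pigeonhole principle two of the three vertices, say $x$ and $y$, satisfy $G(v,x) = G(v,y)$. I claim that $\{x,y\}$ is then a s-duo of $G$, which will contradict the hypothesis that $G$ is sduo-free.

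To verify the claim I would check that $\{x,y\}$ is both a stable set and a module of $G$. Stability is immediate since $x,y \in \{a,b,c\}$ and $\{a,b,c\}$ is a stable set, so $G(x,y) = 0$. For the module condition I would check $w \equiv_G \{x,y\}$ for every $w \in V(G) \setminus \{x,y\}$, splitting into three cases: if $w$ is the third element $z$ of $\{a,b,c\}$, then $G(z,x) = 0 = G(z,y)$ by stability of $\{a,b,c\}$; if $w = v$, then $G(v,x) = G(v,y)$ by the pigeonhole choice; and if $w \in V(G) \setminus \{v,a,b,c\}$, then $w \equiv_G \{a,b,c\}$ already yields $w \equiv_G \{x,y\}$. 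Hence $\{x,y\} \in \text{sduo}(G)$, the desired contradiction.

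The only delicate point is the reintegration of the deleted vertex $v$: the set $\{a,b,c\}$ need not be a module of $G$, since $v$ could split it. The pigeonhole step is exactly what resolves this, as it isolates a two-element subset on which $v$ behaves uniformly, and that uniformity supplies precisely the one adjacency relation not already inherited from $G-v$. I expect this to be the main (though modest) obstacle; the remaining verifications of the module condition are routine.
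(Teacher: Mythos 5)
Your proof is correct and follows essentially the same route as the paper: take two intersecting s-duos of $G-v$, use Remark~\ref{remunionsmodules} to get a three-element s-module of $G-v$, apply pigeonhole to the adjacencies of $v$ to that triple, and extract a s-duo of $G$, contradicting sduo-freeness. The only cosmetic difference is that you verify the module condition for the resulting pair directly, whereas the paper invokes Remark~\ref{remsubsmodule} to shortcut that check.
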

\begin{proof}
Let $G$ be a sduo-free graph and let $v \in V(G)$. Let $S$ and $S'$ be two distinct elements of ${\rm sduo}(G-v)$. Suppose for a contradiction that $S \cap S' \neq \varnothing$. In this instance, there are three pairwise distinct vertices $x,y,z \in V(G-v)$ such that $S = xy$ and $S' = yz$. There are distinct $\alpha, \beta \in \{x,y,z\}$ such that $G(v, \alpha) = G(v, \beta)$. On the other hand, since $S \cup S' = \{x,y,z\} \in \text{smod}(G-v)$ (see Remark~\ref{remunionsmodules}), then $\alpha\beta \in \text{sduo}(G-v)$ (see Remark~\ref{remsubsmodule}). Since $G(v, \alpha) = G(v, \beta)$, it follows that $\alpha\beta \in \text{sduo}(G)$, contradicting that $G$ is sduo-free. 
\end{proof}
 

	

In the proof of the next lemma, we need the following well-known fact (e.g., see \cite[Exercise 2.7.6]{Bondy}).
\begin{Fact} \label{fact}
	Given a connected graph $G$ such that $v(G) \geq 1$, there exists a vertex $v$ of $G$ such that $G-v$ is connected as well.  
\end{Fact}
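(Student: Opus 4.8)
The plan is to exhibit a non-cut vertex, and the cleanest route is via a spanning tree. First I would dispose of the degenerate case: if $v(G) = 1$, then for the unique vertex $v$ the graph $G - v$ has no vertices at all, so the defining condition of connectedness (a statement about \emph{distinct} $x,y$) holds vacuously, and the claim is immediate. I would therefore assume $v(G) \geq 2$ for the remainder.

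For the main case I would fix a spanning tree $T$ of $G$, which exists precisely because $G$ is connected, and invoke the standard fact that any tree on at least two vertices has a leaf, i.e.\ a vertex of degree $1$ in $T$. Letting $v$ be such a leaf, deleting it from a tree again yields a tree, so $T - v$ is connected; moreover $V(T - v) = V(G) \setminus \{v\} = V(G - v)$, so $T - v$ is a connected spanning subgraph of $G - v$. Since a graph that admits a connected spanning subgraph is itself connected, this would give that $G - v$ is connected.

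If one prefers an argument phrased purely in terms of the definition of connectedness used in the paper (and avoiding trees altogether), I would instead argue via distances. I would fix any $u \in V(G)$ and, for $x \in V(G)$, let $d(u,x)$ be the least $n$ admitting a sequence $u = v_0, \ldots, v_n = x$ with $v_iv_{i+1} \in E(G)$; this is well defined since $G$ is connected. I would then choose $v$ maximizing $d(u,v)$. Because $v(G) \geq 2$ and $G$ is connected, some vertex has positive distance from $u$, so $v \neq u$ and hence $u \in V(G-v)$. For any $x \in V(G) \setminus \{v\}$, a shortest $u$--$x$ sequence cannot pass through $v$: otherwise its initial segment from $u$ to $v$ has length $\geq d(u,v)$ and at least one further edge is needed to reach $x \neq v$, forcing $d(u,x) \geq d(u,v) + 1 > d(u,v)$ and contradicting the maximality of $d(u,v)$. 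Thus every vertex of $G - v$ is joined to $u$ by a sequence lying inside $G - v$, whence $G - v$ is connected.

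The only nontrivial ingredients are the standard structural facts feeding the first route --- existence of a spanning tree, existence of a leaf, and the preservation of connectivity under leaf deletion --- or, in the second route, the single observation that a vertex of maximum distance from a fixed root is never an interior vertex of a shortest path. Both are entirely elementary and are exactly what underlies the cited exercise, so I do not anticipate a genuine obstacle; the only point demanding care is the bookkeeping in the one-vertex case, where $G - v$ is empty and ``connected'' must be read vacuously.
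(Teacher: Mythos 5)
Your proposal is correct, but there is nothing in the paper to compare it against line by line: the paper deliberately gives no proof of this fact, stating only that it is well known and citing it as Exercise 2.7.6 of Bondy and Murty. Your two arguments are precisely the standard ones underlying that exercise. The spanning-tree route (take a leaf of a spanning tree $T$; then $T-v$ is a connected spanning subgraph of $G-v$) is the shortest path to the result, at the cost of importing the existence of spanning trees and of leaves. The distance route (a vertex at maximum distance from a fixed root $u$ lies on no shortest $u$--$x$ path for $x \neq v$) is self-contained and phrased entirely in the paper's own definition of connectedness, which fits the paper's conventions slightly better. Your handling of the degenerate case $v(G)=1$ is also the right reading: under the paper's definition, the empty graph $G-v$ is vacuously connected, and indeed the fact as stated (with $v(G) \geq 1$ rather than $v(G) \geq 2$) is only true under that vacuous reading. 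Either of your two arguments would serve as a complete replacement for the citation.
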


\begin{lem} \label{lemauplus1}
	Let $G$ be a graph such that $v(G) \geq 1$. If $G$ is connected and sduo-free, then there exists $v \in V(G)$ such that $G-v$ is connected and $|{\rm sduo}(G-v)| \leq 1$. 
\end{lem}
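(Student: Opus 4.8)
The plan is to argue by contradiction: I would assume that \emph{every} vertex $v$ for which $G-v$ is connected satisfies $|\mathrm{sduo}(G-v)| \ge 2$, and derive a contradiction by performing two successive vertex deletions. Before starting, I would record the reformulation that drives everything. Since $G$ is sduo-free, for any vertex $w$ a $2$-subset $\{a,b\}$ lies in $\mathrm{sduo}(G-w)$ if and only if $ab \notin E(G)$ and the neighbourhoods of $a$ and $b$ in $G$ differ only at $w$, i.e. $N_G(a)\setminus\{w\} = N_G(b)\setminus\{w\}$ while $N_G(a) \ne N_G(b)$. Indeed, $\{a,b\}$ being a module of $G-w$ is exactly the equality $N_G(a)\setminus\{w\} = N_G(b)\setminus\{w\}$, and sduo-freeness of $G$ forbids $N_G(a)=N_G(b)$; consequently $w$ is adjacent to exactly one of $a,b$, and if $wa \in E(G)$, $wb \notin E(G)$ then $N_G(a) = N_G(b)\cup\{w\}$, so $N_G(b) \subseteq N_G(a)$.

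I would then start from a vertex $v$ with $G-v$ connected, which exists by Fact~\ref{fact}. The contradiction hypothesis gives $\mathrm{sduo}(G-v) \ne \varnothing$, so I can pick $\{x,y\} \in \mathrm{sduo}(G-v)$ and name $x,y$ so that $vx \in E(G)$ and $vy \notin E(G)$; by the reformulation this yields $N_G(y) \subseteq N_G(x)$ together with $x \ne y$ and $xy \notin E(G)$. The first substantive step is to observe that $G-y$ is again connected: since every neighbour of $y$ is a neighbour of $x$ and $x \ne y$, any path through $y$ reroutes through $x$, so $y$ too can be removed without disconnecting $G$ (this is in the spirit of Lemma~\ref{lemrem}, though here it is a direct rerouting rather than an application).

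Now the contradiction hypothesis applies to $y$ as well, giving $|\mathrm{sduo}(G-y)| \ge 2$; by Lemma~\ref{lemsduo} (using sduo-freeness of $G$) these s-duos are pairwise disjoint, so I can choose two \emph{disjoint} pairs $\{a_1,b_1\}, \{a_2,b_2\} \in \mathrm{sduo}(G-y)$, named so that $y a_i \in E(G)$ and $y b_i \notin E(G)$, whence $N_G(a_i) = N_G(b_i)\cup\{y\}$. The crux — and the step I expect to be the main obstacle, being where the two deletions interact — is to prove that each index $i$ forces $b_i = v$. The computation is short: from $y a_i \in E(G)$ one gets $a_i \in N_G(y) \subseteq N_G(x)$, hence $x a_i \in E(G)$; whereas $x b_i \in E(G)$ would require $b_i \in N_G(x) = N_G(y)\cup\{v\}$, impossible when $b_i \ne v$ because $y b_i \notin E(G)$ gives $b_i \notin N_G(y)$. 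So if $b_i \ne v$ then $x$ lies in $N_G(a_i)\setminus N_G(b_i) = \{y\}$, forcing $x = y$, a contradiction.

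Finally, since $b_1 \ne b_2$ (the two pairs are disjoint), at most one of them can equal $v$, so some index $i$ has $b_i \ne v$ and produces the contradiction. This refutes the hypothesis, so there must exist a vertex $v$ with $G-v$ connected and $|\mathrm{sduo}(G-v)| \le 1$, which is exactly the claim of Lemma~\ref{lemauplus1}. The only routine points I would still need to check carefully are the trivial low-order cases (where $G-v$ may be empty or a single vertex) and the bookkeeping that $x$, $y$, $a_i$, $b_i$ are genuinely distinct where the argument assumes it.
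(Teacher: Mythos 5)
Your proposal is correct, and the two checks you defer at the end do go through: $a_i\neq x$ and $a_i\neq v$ hold automatically (since $ya_i\in E(G)$ while $xy,vy\notin E(G)$), the degenerate case $x=b_i$ still lands in $x\in N_G(a_i)\setminus N_G(b_i)=\{y\}$ and so yields the same contradiction, and in the low-order cases ($G-v$ empty or a single vertex) the contradiction hypothesis already fails at $v$ itself, so nothing extra is needed. Your route shares the paper's two-deletion skeleton --- take a vertex supplied by Fact~\ref{fact}, and if its deletion leaves s-duos, delete instead the member of an s-duo not adjacent to that vertex --- but the work is distributed quite differently. The paper is constructive: it needs \emph{two} disjoint s-duos $v_1v_2$, $v_3v_4$ of $G-x$ (that is where it invokes Lemma~\ref{lemsduo}), it gets connectivity of $G-v_1$ by applying Lemma~\ref{lemrem} inside $G-x$ and re-attaching $x$ along the edge $xv_2$, and it then proves by a module-chasing case analysis that $G-v_1$ is outright sduo-free. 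You instead need only \emph{one} s-duo $\{x,y\}$ of $G-v$, and you push Lemma~\ref{lemsduo} onto the \emph{second} deleted graph: your crux computation, powered by the identities $N_G(x)=N_G(y)\cup\{v\}$ and $N_G(a_i)=N_G(b_i)\cup\{y\}$, shows that every s-duo of $G-y$ has $v$ as its $y$-non-neighbour member, so pairwise disjointness caps $|{\rm sduo}(G-y)|$ at $1$ --- weaker than the paper's sduo-freeness, but exactly what the lemma asks, and your argument reads constructively as ``either $v$ works or $y$ does.'' One choice of yours is necessary rather than cosmetic: Lemma~\ref{lemrem} cannot be applied directly to the deletion of $y$ from $G$, because $\{x,y\}$ is a module of $G-v$ but not of $G$; the paper circumvents this with its two-step re-attachment, while your rerouting through $x$ via $N_G(y)\subseteq N_G(x)$ handles it in one stroke. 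On balance, the paper's proof buys a stronger intermediate conclusion (a deletion that is fully sduo-free), while your neighbourhood-inclusion formulation buys a shorter endgame that avoids the paper's separate elimination of the cases $y\in\{v_2,v_3,v_4\}$.
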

  
\begin{proof}
	Suppose that the graph $G$ is connected and sduo-free. By Fact~\ref{fact}, there exists a vertex $x \in V(G)$ such that $G-x$ is connected. If $|\text{sduo}(G-x)| \leq 1$, then we are done. Hence suppose $|\text{sduo}(G-x)| \geq 2$. By Lemma~\ref{lemsduo}, there are four pairwise dictint vertices $v_1, v_2, v_3, v_4 \in V(G-x)$ such that $v_1v_2$ and $v_3v_4$ are s-duos of $G-x$. Since the graph $G$ is sduo-free, we may assume 
	\begin{equation} \label{eq0}
	G(x,v_1v_3) = 0 \ \text{and} \ G(x,v_2v_4) = 1.
	\end{equation}
	Since $G-xv_1 = (G-x)-v_1$ is connected by Lemma~\ref{lemrem}, and since $xv_2 \in E(G)$ (see (\ref{eq0})), it follows that $G-v_1$ is connected as well. We will prove that the graph $G- v_1$ is sduo-free. We first prove that 
	\begin{equation} \label{eq1}
	\text{sduo}(G-v_1x) \cap \text{sduo}(G-v_1) = \varnothing.
	\end{equation}
	
	Let $S = ss' \in \text{sduo}(G - v_1x)$, where $s \neq s' \in V(G-v_1x)$. To show that $S \in \text{sduo}(G-x)$, it suffices to verify that $G(v_1,s) = G(v_1,s')$. To begin, suppose $v_2 \in ss'$. We may assume $s=v_2$. In this instance, $G(v_1,s') = G(v_2,s') = G(s,s') = 0$ because $v_1v_2$ is a module of $G-x$ and $ss'$ is a stable set of $G$. Since $G(v_1,s) = G(v_1,v_2) = 0$ because $v_1v_2$ is a stable set of $G$, it follow that $G(v_1,s) = G(v_1,s')$ as desired. Now suppose $v_2 \notin ss'$. We have $G(v_1,s) = G(v_2,s)$ and $G(v_1,s') = G(v_2,s')$ because $v_1v_2$ is a module of $G-x$. Moreover, $G(v_2,s) = G(v_2,s')$ because $ss'$ is a module of $G - v_1x$. Thus $G(v_1,s) = G(v_1,s')$, and hence $S \in \text{sduo}(G-x)$. Since $G$ is sduo-free, it follows that $G(x,s) \neq G(x,s')$, and hence $S \notin \text{sduo}(G-v_1)$. Thus (\ref{eq1}) holds.
	
	Now suppose for a contradiction that the graph $G-v_1$ admits a s-duo $D$. It follows from (\ref{eq1}) that $x \in D$. Therefore, $D = xy$ for some $y \in \overline{v_1x}$. To show that $y \in \overline{ \{x,v_1,v_2,v_3,v_4\}}$, we have to verify that $y \notin \{v_2,v_3,v_4\}$. We have $y \notin v_2v_4$ because $G(x,y) =0 \neq G(x, v_2v_4) = 1$ (see (\ref{eq0})). Since $G(y,v_4) = G(x,v_4)$ because $xy$ is module of $G-v_1$, and since $G(x,v_4) =1$ by (\ref{eq0}), we obtain $G(y,v_4) = 1$, and hence $y \neq v_3$ because $G(v_3,v_4) = 0$. Thus $y \in \overline{\{x,v_1,v_2,v_3,v_4\}}$. Since $G(x,v_4) = 1$ (see (\ref{eq0})), then $G(y,v_4) = 1$ because $xy$ is a module of $G-v_1$, and hence $G(y,v_3) = 1$ because $v_3v_4$ is a module of $G-x$. Thus $G(y,v_3) =1 \neq G(x,v_3) = 0$ (see (\ref{eq0})), contradicting that $xy$ is a module of $G-v_1$. Hence $G-v_1$ is sduo-free. Since $G-v_1$ is connected, this completes the proof.  
	\end{proof}  
  
  \begin{rem} \normalfont
  	Lemma~\ref{lemauplus1} does not hold if we replace $|{\rm sduo}(G-v)| \leq 1$ by sduo-free. For example, consider the {\it half graph} \cite{EH} $G_{2n}$ defined on $V(G_{2n}) = \{0, \ldots, 2n-1\}$, where $n \geq 2$, by $E(G_{2n}) = \{(2i)(2j+1) : 0 \leq i \leq j \leq n-1\}$ (see Figure~\ref{G2n}). For every integer $n \geq 2$, the graph $G_{2n}$ is prime, in particular it is connected and sduo-free. In fact, the graphs $G_{2n}$ are, up to isomorphim, the {\it critical} graphs, i.e., the prime graphs $G$ such that for every $v \in V(G)$, the graph $G-v$ is decomposable (see e.g., \cite{ST, BI09}). The graphs $G_{2n}-0$ and $G_{2n}-(2n-1)$ are not connected. Moreover, for every vertex $i \in \{1, \ldots, 2n-2\}$, $\{i-1, i+1\}$ is a s-duo of $G_{2n}-i$. Thus, there does not exist a vertex $v$ of $G_{2n}$ such that $G_{2n}-v$ is connected and sduo-free. 
  \end{rem}
 
  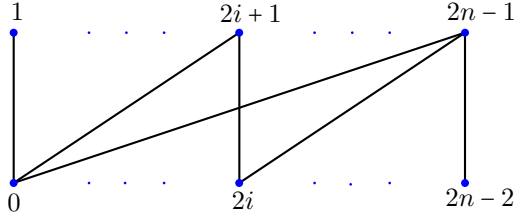
\begin{figure}[h]
  	\begin{center}
  \psset{xunit=1.0cm,yunit=1.0cm,algebraic=true,dotstyle=o,dotsize=3pt 0,linewidth=0.8pt,arrowsize=3pt 2,arrowinset=0.25}
  \begin{pspicture*}(-0.84,0.32)(6.88,3.72)
  \psline(3,1)(3,3)
  \psline(6,1)(6,3)
  \psline(3,1)(6,3)
  \psline(0,3)(0,1)
  \psline(0,1)(3,3)
  \psline(0,1)(6,3)
  \rput[tl](-0.04,3.4){$1$}
  \rput[tl](-0.08,0.86){$0$}
  \rput[tl](2.9,0.88){$2i$}
  \rput[tl](2.74,3.38){$2i+1$}
  \rput[tl](5.74,0.92){$2n-2$}
  \rput[tl](5.76,3.4){$2n-1$}
  \begin{scriptsize}
  \psdots[dotstyle=*,linecolor=blue](0,3)
  \psdots[dotstyle=*,linecolor=blue](3,1)
  \psdots[dotstyle=*,linecolor=blue](3,3)
  \psdots[dotstyle=*,linecolor=blue](6,1)
  \psdots[dotstyle=*,linecolor=blue](6,3)
  \psdots[dotsize=1pt 0,dotstyle=*,linecolor=blue](1,1)
  \psdots[dotsize=1pt 0,dotstyle=*,linecolor=blue](1.48,1)
  \psdots[dotsize=1pt 0,dotstyle=*,linecolor=blue](2,1)
  \psdots[dotsize=1pt 0,dotstyle=*,linecolor=blue](1,3)
  \psdots[dotsize=1pt 0,dotstyle=*,linecolor=blue](2,3)
  \psdots[dotsize=1pt 0,dotstyle=*,linecolor=blue](1.5,3)
  \psdots[dotsize=1pt 0,dotstyle=*,linecolor=blue](4,1)
  \psdots[dotsize=1pt 0,dotstyle=*,linecolor=blue](5,1)
  \psdots[dotsize=1pt 0,dotstyle=*,linecolor=blue](5,3)
  \psdots[dotsize=1pt 0,dotstyle=*,linecolor=blue](4,3)
  \psdots[dotsize=1pt 0,dotstyle=*,linecolor=blue](4.48,0.98)
  \psdots[dotsize=1pt 0,dotstyle=*,linecolor=blue](4.5,3)
  \psdots[dotstyle=*,linecolor=blue](0,1)
  \end{scriptsize}
  \end{pspicture*}
  \caption{The half graph $G_{2n}$.}
  \label{G2n}
  \end{center}
  \end{figure}

  \begin{Notation} \normalfont \label{notationpGX}
  	Let $G$ be a graph or a digraph. With any subset $X$ of $V(G)$ such that $G[X]$ is prime, we associate the following subsets of $\overline{X}$. 
  	\begin{itemize}
  		\item $\text{Ext}_G(X)$ is the set of $v \in \overline{X}$ such that $G[X \cup \{v\}]$ is prime;
  		\item $\langle X \rangle_G$ is the set of $v \in \overline{X}$ such that $X$ is a module $G[X \cup \{v\}]$;
  		\item given $u \in X$, $X_G(u)$ is the set of $v \in \overline{X}$ such that $uv$ is a module of $G[X \cup \{v\}]$.
  	\end{itemize}
  The family $\{\text{Ext}_G(X), \langle X \rangle_G\} \cup \{X_G(u) : u \in X\}$ is denoted by $p_{(G, \overline{X})}$.
   \end{Notation}
  In fact, 
  \begin{equation} \label{eqpGX}
  \text{the family} \ p_{(G, \overline{X})} \ \text{form a partition of} \ \overline{X} \ \text{(see Lemma~\ref{lemER})}. 
  \end{equation}
  It is used as follows.
  
  \begin{lem} [see Lemmas 6.3 and 6.4 of \cite{ER}] \label{lemER}
  	Given a graph or a digraph $G$ with a subset $X$ of $V(G)$ such that $G[X]$ is prime, the family $p_{(G, \overline{X})}$ form a partition of $\overline{X}$. Moreover, the following three assertions hold.
  	\begin{enumerate}
  		\item For $u \in \langle X \rangle_G$ and $v \in \overline{X} \setminus \langle X \rangle_G$, if $X \cup \{v\}$ is not a module of $G[X \cup uv]$, then $G[X \cup uv]$ is prime.
  		\item Given $u \in X$, for $x \in X_G(u)$ and $y \in \overline{X} \setminus X_G(u)$, if $ux$ is not a module of $G[X \cup xy]$, then $G[X \cup xy]$ is prime.
  		\item For distinct $u,v \in {\rm Ext}_G(X)$, if $uv$ is not a module of $G[X \cup uv]$, then $G[X \cup uv]$ is prime.
  		\end{enumerate} 
  \end{lem}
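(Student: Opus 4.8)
The plan is to handle the partition statement and all three assertions by one uniform device: every module of the enlarged induced subgraph will be pulled back to a module of the prime graph $G[X]$, where it is forced to be trivial. Two standard facts about modules will be used repeatedly. \emph{Restriction}: if $M$ is a module of a graph or digraph $H$ and $Y \subseteq V(H)$, then $M \cap Y$ is a module of $H[Y]$; in particular a module of $H$ that is contained in $Y$ is a module of $H[Y]$. \emph{Overlap}: if $A$ and $B$ are modules with $A \cap B \neq \varnothing$, then $A \cup B$ is a module, and if moreover $A \setminus B \neq \varnothing \neq B \setminus A$, then $A \setminus B$ is a module as well. Since $G[X]$ is prime we have $|X| \geq 3$, which I will use to guarantee that the modules produced below are genuinely nontrivial.

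For the partition I would fix $v \in \overline{X}$, set $H = G[X \cup \{v\}]$, and examine an arbitrary nontrivial module $M$ of $H$. By restriction $M \cap X$ is a module of $G[X]$, hence trivial, so $|M \cap X| \leq 1$ or $M \cap X = X$; a split on whether $v \in M$ then forces either $M = X$, whence $v \in \langle X \rangle_G$, or $M = \{u,v\}$ for a (unique) $u \in X$, whence $v \in X_G(u)$. This already shows that $\langle X \rangle_G$ together with the sets $X_G(u)$ covers $\overline{X} \setminus \mathrm{Ext}_G(X)$. Disjointness is the combinatorial core of this part: if $v$ belonged to two of these classes, two distinct nontrivial modules of $H$ would overlap, and the overlap property would yield a proper module of $G[X]$ of size at least $2$ (namely $X \setminus \{u\}$ when $\langle X \rangle_G$ meets $X_G(u)$, or $\{u,u'\}$ when $X_G(u)$ meets $X_G(u')$), contradicting primality; incompatibility of $\mathrm{Ext}_G(X)$ with the other classes is immediate, since a prime $H$ has no nontrivial module at all.

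For the three assertions I would run the same scheme on $H = G[X \cup \{a,b\}]$ — with $\{a,b\}$ equal to $\{u,v\}$, $\{x,y\}$, and $\{u,v\}$ respectively — assuming for contradiction that $H$ has a nontrivial module $M$, and splitting on $M \cap \{a,b\} \in \{\varnothing,\{a\},\{b\},\{a,b\}\}$. In every case $M \cap X$ is again trivial, and restricting $M$ to $X \cup \{a\}$ or to $X \cup \{b\}$ lets me feed in exactly what each hypothesis says about the single-vertex extensions: that $G[X \cup \{u\}]$ (resp. $G[X \cup \{v\}]$) is prime when the vertex lies in $\mathrm{Ext}_G(X)$ (Assertion~3), that $X$ is a module of $G[X \cup \{u\}]$ when $u \in \langle X \rangle_G$ (Assertion~1), and that $\{u,x\}$ is a module of $G[X \cup \{x\}]$ when $x \in X_G(u)$ (Assertion~2). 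Combined with the disjointness of the partition classes established above — used, for instance, to rule out a module whose trace on $X$ would be a forbidden duo $\{u,u'\}$ — these considerations collapse $M$ to one of a very short list of candidate shapes.

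The hard part will be the last step, which is also the only place where the standing hypothesis of each assertion is actually consumed: eliminating the surviving small candidate. In Assertion~3 this candidate is $\{u,v\}$ itself, so the hypothesis applies at once. In Assertions~1 and~2 it is a \emph{different} set — a duo such as $\{v,x_1\}$ or $\{x_1,y\}$ with $x_1 \in X$, or the large set $X \cup \{x\}$ — and the work is to show it cannot coexist with the other added vertex without forcing the expressly forbidden set ($X \cup \{v\}$, or $\{u,x\}$) to be a module of $H$. I expect this to reduce to a short propagation of equalities in the notation $G(\cdot,\cdot)$: from ``$M$ is a module'' one reads that the remaining vertex relates identically to the two elements of the candidate, and chaining this through the clone/uniformity relation supplied by the membership hypothesis — for example $G(w,x)=G(w,u)$ for every $w \in X \setminus \{u\}$ when $x \in X_G(u)$ — yields that the forbidden set meets the module condition at every outside vertex, the sought contradiction. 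Carrying these chains out for both the undirected and the oriented case, with the convention $G(a,b) = -G(b,a)$ tracked in the digraph setting, is the delicate but routine bookkeeping on which the whole argument finally rests.
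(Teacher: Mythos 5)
Your proposal is correct, but note first that the paper itself contains no proof of this lemma: it is imported wholesale from Lemmas 6.3 and 6.4 of \cite{ER}, where it is proved in the more general framework of 2-structures. So your self-contained argument is necessarily a different route, and it is the natural one. Your treatment of the partition claim is complete and sound: restricting a nontrivial module $M$ of $G[X \cup \{v\}]$ to $X$ and splitting on whether $v \in M$ gives coverage, and the overlap facts (producing $X \setminus \{u\}$, respectively $\{u,u'\}$, as nontrivial modules of the prime graph $G[X]$) give disjointness. For the three assertions, your scheme --- case analysis on $M \cap \{a,b\}$, restriction to the one-vertex extensions, then elimination of the few surviving candidates by chaining equalities of $G(\cdot,\cdot)$ through the module conditions --- does go through; every surviving shape closes in a few lines. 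For instance, in Assertion~2 a module $\{w,y\}$ with $w \in X \setminus \{u\}$ forces $G(y,u) = G(w,u) = G(w,x) = G(y,x)$, so $\{u,x\}$ becomes a module of $G[X \cup \{x,y\}]$, contradicting the standing hypothesis, while the candidates $\{x,y\}$ and $\{u,x,y\}$ force $y \in X_G(u)$, contradicting membership; the cases of Assertion~1 close analogously. Two caveats. First, you defer exactly this case-by-case bookkeeping as ``routine''; in a finished proof those half-dozen checks must actually appear, since they are the entire content of Assertions~1--3 --- but as no case fails, this is incompleteness of execution rather than a gap in the idea. Second, your convention $G(a,b) = -G(b,a)$ is valid only for oriented graphs, whereas the lemma is stated for arbitrary digraphs, in which both arcs (or neither) may join two vertices; in that generality one must track the two directional conditions in the definition of a module separately (or pass to 2-structures, as \cite{ER} does). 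In sum, your route buys a self-contained and elementary proof specialized to the structures the paper actually uses; the paper's citation buys brevity and the full generality of the 2-structure setting.
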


  \section{Proof of Theorem~\ref{thmprincipal}}
  Our proof of Theorem~\ref{thmprincipal} is by induction on $|\text{sduo}(G)|$. Proposition~\ref{propsduofree} below is then the basis step. We also need to verify the prime  orientability of some graphs of small sizes (see Facts~\ref{factsize34} and \ref{fact5verices}). Since such verifications are easy to do, we omit the details.   
  
\begin{Fact} \label{factsize34}
	Up to isomorphism, there are exactly six graphs with three or four vertices satisfying Conditions $1$ and $2$ of Theorem~\ref{thmprincipal} and other than $K_4$. Each of these (connected) graphs is primarily orientable. Figure~\ref{fig34} below gives a prime orientation of each of them.  
\begin{figure}[h]
	\begin{center}
	\psset{xunit=1.0cm,yunit=1.0cm,algebraic=true,dotstyle=o,dotsize=3pt 0,linewidth=0.8pt,arrowsize=3pt 2,arrowinset=0.25}
	\begin{pspicture*}(-4.34,2.6)(7.5,4.26)
	\psline{->}(-3.52,3.8)(-4,3)
	\psline{->}(-3,3)(-3.52,3.8)
	\psline{->}(-1,3)(-1.52,3.78)
	\psline{->}(-1.52,3.78)(-2,3)
	\psline{->}(-2,3)(-1,3)
	\psline{->}(1,3)(1,4)
	\psline{->}(1,4)(0,4)
	\psline{->}(0,4)(0,3)
	\psline{->}(0,3)(1,3)
	\psline{->}(3,3)(3,4)
	\psline{->}(3,4)(2,4)
	\psline{->}(2,4)(2,3)
	\psline{->}(2,3)(3,3)
	\psline{->}(3,3)(2,4)
	\psline{->}(4,3)(5,3)
	\psline{->}(5,3)(4,4)
	\psline{->}(4,4)(4,3)
	\psline{->}(5,3)(5,4)
	\psline{->}(7,3)(7,4)
	\psline{->}(7,4)(6,4)
	\psline{->}(6,4)(6,3)
	\begin{scriptsize}
	\psdots[dotstyle=*,linecolor=blue](-4,3)
	\psdots[dotstyle=*,linecolor=blue](-3.52,3.8)
	\psdots[dotstyle=*,linecolor=blue](-3,3)
	\psdots[dotstyle=*,linecolor=blue](-2,3)
	\psdots[dotstyle=*,linecolor=blue](-1.52,3.78)
	\psdots[dotstyle=*,linecolor=blue](-1,3)
	\psdots[dotstyle=*,linecolor=blue](0,3)
	\psdots[dotstyle=*,linecolor=blue](1,3)
	\psdots[dotstyle=*,linecolor=blue](1,4)
	\psdots[dotstyle=*,linecolor=blue](0,4)
	\psdots[dotstyle=*,linecolor=blue](2,3)
	\psdots[dotstyle=*,linecolor=blue](2,4)
	\psdots[dotstyle=*,linecolor=blue](3,4)
	\psdots[dotstyle=*,linecolor=blue](3,3)
	\psdots[dotstyle=*,linecolor=blue](4,3)
	\psdots[dotstyle=*,linecolor=blue](5,3)
	\psdots[dotstyle=*,linecolor=blue](5,4)
	\psdots[dotstyle=*,linecolor=blue](4,4)
	\psdots[dotstyle=*,linecolor=blue](6,3)
	\psdots[dotstyle=*,linecolor=blue](7,3)
	\psdots[dotstyle=*,linecolor=blue](7,4)
	\psdots[dotstyle=*,linecolor=blue](6,4)
	\end{scriptsize}	
	\end{pspicture*}
	\caption{Prime orientations of the primarily orientable graphs $G$ with $v(G) \leq 4$.}
	\label{fig34}
	\end{center}
\end{figure}
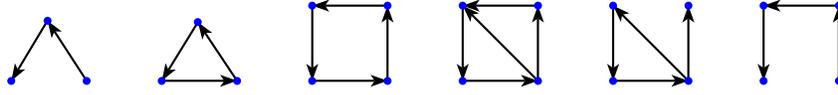
\end{Fact}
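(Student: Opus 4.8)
The plan is to treat this statement as a purely finite verification carried out in three stages: enumerate, filter by the two conditions, and then orient. I would begin by listing, up to isomorphism, all graphs on three vertices (the edgeless graph, a single edge, the path, and $K_3$) and all eleven graphs on four vertices. Condition~$1$ of Theorem~\ref{thmprincipal} removes every disconnected graph at once, leaving on three vertices only the path and $K_3$, and on four vertices the path $P_4$, the star $K_{1,3}$, the cycle $C_4$, the \emph{paw} (a triangle with one pendant vertex), the \emph{diamond} $K_4$ minus an edge, and $K_4$ itself. Excising $K_4$ as required leaves exactly seven candidates to examine.

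Next I would impose Condition~$2$ on these seven by listing, for each, its nontrivial modules, retaining the stable ones, and comparing $d_G(S)$ with $\log_2(|S|)$ for each such s-module $S$. For $K_3$, for $P_4$ (which is prime), and for the paw, every nontrivial module is either absent or non-stable, so Condition~$2$ holds vacuously. For the three-vertex path the two endpoints form an s-duo of degree $1=\log_2 2$; for $C_4$ each of the two diagonals is an s-duo of degree $2$; for the diamond the two nonadjacent vertices form an s-duo of degree $2$; in each case Condition~$2$ is satisfied. The single failure is the star $K_{1,3}$, whose three leaves constitute an s-module $S$ with $d_G(S)=1$ while $\log_2 3>1$. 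This pins down precisely the six surviving graphs and confirms the count.

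For the third stage I would exhibit an explicit orientation of each survivor and confirm primality. The key device that keeps this short is inclusion~(\ref{eqmodGGfleche}): since every module of an orientation $\overrightarrow{G}$ is already a module of the underlying graph $G$, it suffices to verify that each \emph{nontrivial} module of $G$ fails to remain a module under the chosen orientation. Thus for $P_4$, already prime, any orientation serves; for $K_3$ and $C_4$ a coherently directed cycle separates all diagonal pairs; for the three-vertex path a directed path splits its endpoints; for the paw one orients the triangle cyclically so the apex distinguishes the two base vertices; and for the diamond one chooses arcs so that a common neighbour separates each of the nonadjacent pairs. In every instance one walks through the finitely many nontrivial modules of the base graph and checks, via Notation~\ref{notation}, that some external vertex sees the two sides of the module with opposite orientation.

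There is no genuine conceptual obstacle here; the only real demand is bookkeeping, and it has two subtle points worth flagging. First, the four-vertex enumeration must be certified complete, so that exactly the eleven isomorphism types are tested. Second, and more importantly, for the orientation check one must destroy \emph{every} nontrivial module of the underlying graph—not merely the s-modules relevant to Condition~$2$—since primality concerns all modules; in the paw and the diamond, for instance, there are non-stable modules (a triangle edge, or a three-vertex module) that must also be broken. Appealing to~(\ref{eqmodGGfleche}) confines this to a finite, mechanical check on the modules of each small base graph, which is exactly why the details can safely be omitted.
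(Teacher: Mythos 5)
Your proposal is correct and is precisely the finite verification that the paper leaves implicit: the paper omits the details as routine and simply exhibits prime orientations in Figure~\ref{fig34} of the same six graphs your enumeration and filtering produce (the $3$-vertex path, $K_3$, $P_4$, $C_4$, the paw, and the diamond, with $K_{1,3}$ correctly eliminated by Condition~2). Your added care in checking that the chosen orientations destroy \emph{all} nontrivial modules of the underlying graph, not only the stable ones, is exactly the check the paper's figure encodes, so the two arguments coincide.
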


\begin{Fact} \label{fact5verices}
	Up to isomorphism, there are exactly four connected graphs $G$ with a vertex $v \in V(G)$ such that $G-v$ is $K_4$. Each of these (connected) graphs is primarily orientable. See Figure~\ref{fig5} for a prime orientation of each of them. 
	\begin{figure}[h]
		\begin{center}
\psset{xunit=1.0cm,yunit=1.0cm,algebraic=true,dotstyle=o,dotsize=3pt 0,linewidth=0.8pt,arrowsize=3pt 2,arrowinset=0.25}
\begin{pspicture*}(-3.64,1.52)(7.76,4.32)
\psline{->}(-3,2)(-2,2)
\psline{->}(-2,2)(-2,3)
\psline{->}(-2,3)(-3,3)
\psline{->}(-3,3)(-3,2)
\psline{->}(-2,3)(-3,2)
\psline{->}(-3,3)(-2,2)
\psline{->}(-3,3)(-2.5,3.9)
\psline{->}(0,2)(1,2)
\psline{->}(1,2)(1,3)
\psline{->}(1,3)(0,3)
\psline{->}(0,3)(0,2)
\psline{->}(0,3)(1,2)
\psline{->}(1,3)(0,2)
\psline{->}(0,3)(0.48,3.9)
\psline{->}(0.48,3.9)(1,3)
\psline{->}(4,2)(4.24,2.88)
\psline{->}(4.24,2.88)(3.48,3.58)
\psline{->}(3.48,3.58)(2.82,2.88)
\psline{->}(2.82,2.88)(3,2)
\psline{->}(3,2)(4.24,2.88)
\psline{->}(4,2)(3.48,3.58)
\psline{->}(4.24,2.88)(2.82,2.88)
\psline{->}(3.48,3.58)(3,2)
\psline{->}(2.82,2.88)(4,2)
\psline{->}(6,2)(7,2)
\psline{->}(7,2)(7.22,2.86)
\psline{->}(7.22,2.86)(6.48,3.56)
\psline{->}(6.48,3.56)(5.78,2.86)
\psline{->}(5.78,2.86)(6,2)
\psline{->}(6,2)(7.22,2.86)
\psline{->}(7,2)(6.48,3.56)
\psline{->}(7.22,2.86)(5.78,2.86)
\psline{->}(6.48,3.56)(6,2)
\psline{->}(5.78,2.86)(7,2)
\begin{scriptsize}
\psdots[dotstyle=*,linecolor=blue](-3,2)
\psdots[dotstyle=*,linecolor=blue](-2,3)
\psdots[dotstyle=*,linecolor=blue](-3,3)
\psdots[dotstyle=*,linecolor=blue](0,3)
\psdots[dotstyle=*,linecolor=blue](0,2)
\psdots[dotstyle=*,linecolor=blue](1,2)
\psdots[dotstyle=*,linecolor=blue](1,3)
\psdots[dotstyle=*,linecolor=blue](-2,2)
\psdots[dotstyle=*,linecolor=blue](-2.5,3.9)
\psdots[dotstyle=*,linecolor=blue](0.48,3.9)
\psdots[dotstyle=*,linecolor=blue](3.48,3.58)
\psdots[dotstyle=*,linecolor=blue](4,2)
\psdots[dotstyle=*,linecolor=blue](4.24,2.88)
\psdots[dotstyle=*,linecolor=blue](2.82,2.88)
\psdots[dotstyle=*,linecolor=blue](3,2)
\psdots[dotstyle=*,linecolor=blue](6,2)
\psdots[dotstyle=*,linecolor=blue](7,2)
\psdots[dotstyle=*,linecolor=blue](7.22,2.86)
\psdots[dotstyle=*,linecolor=blue](5.78,2.86)
\psdots[dotstyle=*,linecolor=blue](6.48,3.56)
\end{scriptsize}
\end{pspicture*}
	\caption{Prime orientations of the four graphs described in Fact~\ref{fact5verices}.}
	\label{fig5}
	\end{center}
	 \end{figure}
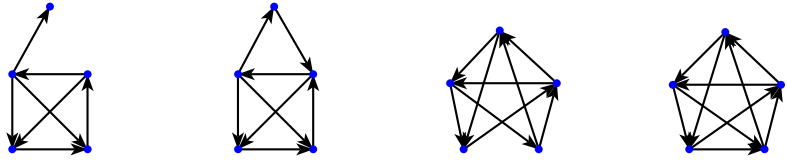
\end{Fact}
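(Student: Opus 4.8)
The plan is to split the statement into its two parts: the enumeration of the four graphs and the verification that each is primarily orientable. For the enumeration, first I would note that any such $G$ has exactly five vertices, namely the four vertices of the copy of $K_4$, say $W$ with $G[W] = K_4$, together with $v$. Since $G[W]$ is complete, the isomorphism type of $G$ is determined entirely by $N_G(v) \subseteq W$. Now the automorphism group $\mathrm{Aut}(K_4) = S_W$ acts transitively on the subsets of $W$ of each fixed cardinality, so the isomorphism type of $G$ depends only on $d_G(v) = |N_G(v)| \in \{0,1,2,3,4\}$. Connectedness forces $v$ to have a neighbour, ruling out $d_G(v) = 0$; conversely each value $d_G(v) \in \{1,2,3,4\}$ yields a connected graph, since $v$ then attaches to the clique $W$. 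This gives exactly four connected graphs up to isomorphism, the case $d_G(v) = 4$ being $K_5$.

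For the second part, it suffices to exhibit, for each of the four graphs, a single prime orientation; Figure~\ref{fig5} supplies a candidate $\overrightarrow{G}$ in each case, so the remaining work is to certify primality. Here I would use (\ref{eqmodGGfleche}): every module of $\overrightarrow{G}$ is a module of the underlying graph $G$, so it is enough to list the (few) nontrivial modules of $G$ and check that the chosen arc directions destroy each of them. Since $v(\overrightarrow{G}) = 5$, a nontrivial module $M$ satisfies $2 \le |M| \le 4$, and for $M$ to remain a module of $\overrightarrow{G}$ every vertex of $\overline{M}$ must have the same in/out/non-arc pattern toward all of $M$. Scanning the finitely many $2$-, $3$-, and $4$-element subsets against the recorded orientation confirms that no such $M$ survives, so each $\overrightarrow{G}$ is prime and each graph is primarily orientable.

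I do not expect a genuine obstacle here: the enumeration is immediate from the transitivity of $\mathrm{Aut}(K_4)$ on subsets of $W$, and the primality check is a bounded finite verification. The only point demanding care is to confirm that the four orientations drawn in Figure~\ref{fig5} are genuinely prime, i.e.\ that the drawn arcs break every potential module of the underlying graph; restricting attention to modules of the undirected $G$ via (\ref{eqmodGGfleche}) keeps this verification short, which is exactly what licenses the remark that the details may be omitted.
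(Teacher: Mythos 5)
Your proposal is correct and matches what the paper does: the paper states this Fact with the verification deliberately omitted (``Since such verifications are easy to do, we omit the details''), relying exactly on the enumeration by $d_G(v)\in\{1,2,3,4\}$ and on the primality of the four orientations drawn in Figure~\ref{fig5}. Your write-up simply fills in those omitted details in the natural way --- transitivity of $\mathrm{Aut}(K_4)$ on subsets of each size for the enumeration, and a finite module check (shortened via (\ref{eqmodGGfleche})) for primality --- so there is nothing to flag.
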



  \begin{prop} \label{propsduofree}
  	Given a graph $G$ with at least three vertices, other than $K_4$, if $G$ is connected and sduo-free, then $G$ is primarily orientable.
  \end{prop}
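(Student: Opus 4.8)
The plan is to argue by induction on $v(G)$, over the class of connected sduo-free graphs other than $K_4$ with at least three vertices. For the base, a connected sduo-free graph with three or four vertices (and $\neq K_4$) satisfies Condition~1 of Theorem~\ref{thmprincipal} and satisfies Condition~2 vacuously, since an sduo-free graph has no s-module at all (any s-module of size $\ge 2$ would contain an s-duo by Remark~\ref{remsubsmodule}); hence it is one of the six graphs of Fact~\ref{factsize34} and is primarily orientable. For the inductive step I would invoke Lemma~\ref{lemauplus1} to choose $v$ with $H:=G-v$ connected and $|\mathrm{sduo}(H)|\le 1$, then produce a prime orientation $\overrightarrow{H}$ and extend it across $v$ by orienting the edges incident to $v$.

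To orient $H$: if $H=K_4$ then $G$ is a five-vertex connected graph with $G-v=K_4$, hence one of the four graphs of Fact~\ref{fact5verices}, and we are done directly. If $H$ is sduo-free and $H\neq K_4$, the induction hypothesis gives a prime $\overrightarrow{H}$. The remaining case is $|\mathrm{sduo}(H)|=1$, with s-duo $\{a,b\}$. Here I would delete $a$: by Lemma~\ref{leminclustrict} the graph $H-a$ is sduo-free, and by Lemma~\ref{lemrem} it is connected (as $\{a,b\}$ is a nontrivial module of $H$), so, treating $H-a=K_4$ again via Fact~\ref{fact5verices}, the induction hypothesis yields a prime $\overrightarrow{H-a}$. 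Since $G$ is sduo-free, $\{a,b\}$ is not a module of $G$, and one checks that the only graph-twin of $a$ in $H$ is $b$: a true twin of $a$ would be adjacent to $a$ but non-adjacent to $b$ while sharing $b$'s neighbourhood, a contradiction, and a second false twin would give a second s-duo. Reinserting $a$ and orienting one edge from $a$ to a common neighbour of $a,b$ oppositely to the corresponding edge of $b$ breaks $\{a,b\}$; as $a\not\sim b$, the vertex $a$ is not universal, so $V(H-a)$ is not a module either, and no other two-element set can be one. This gives a prime $\overrightarrow{H}$.

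It remains to extend $\overrightarrow{H}$ across $v$. Because $\overrightarrow{H}$ is prime and the trace of any module of the extended orientation $\overrightarrow{G}$ on $V(H)$ is a module of $\overrightarrow{H}$, any nontrivial module of $\overrightarrow{G}$ meets $V(H)$ in a trivial set; so the only sets that can become modules are $V(H)$ and two-element sets $\{u,v\}$. In the language of Lemma~\ref{lemER} with $X=V(H)$, I must orient the edges at $v$ so that $v$ falls into $\mathrm{Ext}_{\overrightarrow{G}}(V(H))$, i.e. so that neither $V(H)$ nor any $\{u,v\}$ is a module. A set $\{u,v\}$ can be a module only if $u$ is a graph-twin of $v$, and sduo-freeness of $G$ forbids false twins, so the candidate partners are exactly the true twins of $v$, forming a (possibly empty) clique $T$ with $T\cup\{v\}$ a module of $G$. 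If $T=\varnothing$, no two-element module arises. If $T=\{u_0\}$, connectivity provides a common neighbour $w$ of $u_0,v$, and orienting $vw$ oppositely to $u_0w$ breaks $\{u_0,v\}$. If $|T|\ge 2$, the point is purely about tournaments: for the fixed tournament $\overrightarrow{H}[T]$ one can orient the edges from $v$ into $T$ so that $v$ is a twin of no vertex of $T$ (directly for $|T|=2$, and because at most $2|T|<2^{|T|}$ orientations are forbidden when $|T|\ge 3$), which simultaneously kills every $\{u,v\}$. In each case the remaining edges at $v$ are free, and if $v$ is universal this freedom lets us keep its orientation non-uniform, so that $V(H)$ is not a module. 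Hence $\overrightarrow{G}$ is prime. The degenerate case $T=V(H)$ is $G=K_n$, where the same tournament extension applied to $\overrightarrow{H}=\overrightarrow{K_{n-1}}$ concludes.

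The main obstacle is precisely this last extension step: one must guarantee that a single new vertex can be oriented into its true-twin clique without ever reproducing a two-element module, while at the same time preventing $V(H)$ from becoming a module. Everything else — preserving connectivity, reducing the number of s-duos, and the $K_4$-exceptions — is bookkeeping handled by Lemmas~\ref{lemrem}, \ref{leminclustrict}, \ref{lemauplus1} and Facts~\ref{factsize34}, \ref{fact5verices}; the genuine content is the tournament-extension count, combined with the two structural facts that sduo-freeness removes all false twins and primality of $\overrightarrow{H}$ keeps the true-twin cliques under control.
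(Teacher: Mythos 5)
Your proof is correct, and its skeleton is the one the paper uses: induction on $v(G)$ with the base cases supplied by Facts~\ref{factsize34} and \ref{fact5verices}, the choice of the deleted vertex via Lemma~\ref{lemauplus1}, and primality of the extended orientation certified through the partition $p_{(G,\overline{X})}$ of Notation~\ref{notationpGX} and Lemma~\ref{lemER}. Where you genuinely diverge is in how the two extension arguments are carried out. For the case $|\mathrm{sduo}(G-v)|=1$, the paper deletes \emph{both} $v$ and one vertex $s$ of the s-duo (chosen with $G(v,s)=0$), gets a prime orientation of $G-\{v,s\}$ by induction, and re-inserts the two vertices \emph{simultaneously} using Assertion~2 of Lemma~\ref{lemER}, by making $s$ an oriented twin of $s'$ while separating $v$; you instead perform two successive one-vertex extensions, first re-inserting $a$ into $G-v-a$ (using that $b$ is the only twin of $a$ in $G-v$, which you correctly derive from sduo-freeness of $G-v-a$ plus uniqueness of the s-duo), then re-inserting $v$. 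For the single-vertex extension itself, the paper splits on whether the new vertex is universal --- a global count $2^{n-1}>2n$ in the universal case, and otherwise the ``orient every edge out of $x$, and if the result is decomposable flip one arc'' trick --- whereas you organize everything around the true-twin clique $T$ of $v$ in $G$, using sduo-freeness to exclude false twins and a local count $2^{|T|}>2|T|$ (with direct constructions for $|T|\le 2$) to break all twin pairs inside $T$, plus non-uniformity at $v$ to prevent $V(H)$ from becoming a module. Your version is more uniform (one extension mechanism covers every case, and it makes transparent that sduo-freeness is exactly the hypothesis that kills false twins); the paper's version never needs to analyse the twin structure of $G$ and settles the $|\mathrm{sduo}|=1$ case in a single two-vertex step. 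The only place you are too terse is the degenerate case $G=K_n$: there the count must exclude both the at most $2(n-1)$ twin-producing extensions and the $2$ uniform ones, so one needs $2^{n-1}>2n$, which holds precisely because $n\ge 5$ --- this is the same inequality as in the paper's universal case, and stating it (or simply invoking the existence of prime tournaments on $n\ge 5$ vertices) closes your proof completely.
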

\begin{proof}
	Consider a connected and sduo-free graph $G$ other than $K_4$ and such that $v(G) \geq3$. To prove that $G$ is primarily orientable, we proceed by induction on $v(G) =n$. When $n =3$ or $4$, the graph $G$ is primarily orientable by Fact~\ref{factsize34}. Let $n \geq 5$. By Lemma~\ref{lemauplus1}, there exists $x \in V(G)$ such that $G-x$ is connected and $|\text{sduo}(G-x)| \leq 1$. 
	
	First suppose $|\text{sduo}(G-x)| = 1$. Let $S = ss'$ be the unique s-duo of $G-x$. Since $S$ is a s-duo of $G-x$ but not of $G$, we have $G(x,s) \neq G(x,s')$. By interchanging $s$ and $s'$, we may assume $G(x,s) =0$ and $G(x,s') =1$. By Lemma~\ref{lemrem}, the subgraph $G-xs = (G-x)-s$ is connected. Moreover, $G-xs$ is sduo-free by Lemma~\ref{leminclustrict}. By the induction hypothesis, $G-xs$ is $K_4$ or $G-xs$ is primarily orientable. Suppose that $G -xs$ is $K_4$. In this instance, $G-x$ is the graph obtained from $K_5$ by deleting the edge $ss'$. By Fact~\ref{fact5verices}, $G-x$ admits a prime orientation $P$. Fix $\alpha \in  \overline{\{x,s,s'\}}$, and let $Q$ be an orientation of $G$ such that $Q-x = P$ and $Q(x, \alpha) \neq Q(s', \alpha)$. Such an orientation exists because $s'\alpha \in E(G)$. Set $Z = \overline{\{x\}}$. By construction, it is easily seen $x \notin \langle Z \rangle_Q$, and that for every $z \in Z$, we have $x \notin Z_Q(z)$. It follows that $x \in \text{Ext}_Q(Z)$ (see (\ref{eqpGX})), that is, $Q$ is prime.  Now suppose that $G-xs$ is primarily orientable. Let $\overrightarrow{G-xs}$ be a prime orientation of $G-xs$. Since $G$ is connected and $G(s, xs')=0$, there exists $v \in \overline{\{x,s,s'\}}$ such that $G(v,s) =1$, and hence $G(v,s') =1$ because $ss'$ is a module of $G-x$. Thus, there exists an orientation $\overrightarrow{G}$ of $G$ satisfying the following three conditions :
	\begin{itemize}
		\item $\overrightarrow{G}-xs = \overrightarrow{G-xs}$,
		\item for every $u \in \overline{\{x,s,s'\}}$, $\overrightarrow{G}(u,s) = \overrightarrow{G}(u,s')$,
		\item $\overrightarrow{G}(v,x) \neq \overrightarrow{G}(v,s')$.
		\end{itemize}
Let $X = \overline{xs}$, and consider the partition $p_{(G,\overline{X})}$ (see Notation~\ref{notationpGX}). By construction, we have $s \in X_{\overrightarrow{G}}(s')$, $x \notin X_{\overrightarrow{G}}(s')$, and $ss'$ is not a module of $\overrightarrow{G}$. It follows from Assertion~2 of Lemma~\ref{lemER} that $\overrightarrow{G}$ is prime. 	
  	
 Second, suppose that $G-x$ is sduo-free. It follows from the induction hypothesis that $G-x$ is $K_4$ or $G-x$ is primarily orientable. When $G-x$ is $K_4$, the graph $G$ is primarily orientable by Fact~\ref{fact5verices}. So suppose that $G-x$ is primarily orientable, and let $\overrightarrow{G-x}$ be a prime orientation of $G-x$. We consider the set $\overrightarrow{\mathcal{G}}$ of all the orientations $\overrightarrow{G}$ of $G$ such that $\overrightarrow{G}-x = \overrightarrow{G-x}$. Set $X = \overline{\{x\}}$ and $Y = \{y \in X : xy \in E(G)\}$. Since $G$ is connected, we have $Y \neq \varnothing$. We distinguish the following two cases. 
 
 First suppose $Y = X$. Let us consider the set $\overrightarrow{\mathcal{G}}_0$ (resp. $\overrightarrow{\mathcal{G}}_1$, $\overrightarrow{\mathcal{G}}_2$) of the elements $\overrightarrow{G}$ of $\overrightarrow{\mathcal{G}}$ such that $x \in \text{Ext}_{\overrightarrow{G}}(X)$ (resp. $x \in \langle X \rangle_{\overrightarrow{G}}$, $x \in X_{\overrightarrow{G}}(u)$ for some $u \in X$). Notice that $\overrightarrow{\mathcal{G}}_0$ is the set of prime elements of $\overrightarrow{\mathcal{G}}$, and that $\{\overrightarrow{\mathcal{G}}_0, \overrightarrow{\mathcal{G}}_1, \overrightarrow{\mathcal{G}}_2\}$ form a partition of $\overrightarrow{\mathcal{G}}$ (see (\ref{eqpGX})). We will prove that $\overrightarrow{\mathcal{G}}_0 \neq \varnothing$ by a counting argument. Indeed, it is easily seen that $|\overrightarrow{\mathcal{G}}| = 2^{n-1}$, $|\overrightarrow{\mathcal{G}}_1| = 2$, and $|\overrightarrow{\mathcal{G}}_2| = 2(n-1)$. Since $|\overrightarrow{\mathcal{G}}| = |\overrightarrow{\mathcal{G}}_0| + |\overrightarrow{\mathcal{G}}_1| + |\overrightarrow{\mathcal{G}}_2|$ because $\{\overrightarrow{\mathcal{G}}_0, \overrightarrow{\mathcal{G}}_1, \overrightarrow{\mathcal{G}}_2\}$ is a partition of $\overrightarrow{\mathcal{G}}$, it follows that $|\overrightarrow{\mathcal{G}}_0| = 2^{n-1} -2n$. Since $2^{n-1} -2n > 0$ because $n \geq 5$, we obtain $\overrightarrow{\mathcal{G}}_0 \neq \varnothing$, and hence $G$ is primarily orientable. 
 
 Second, suppose $Y \varsubsetneq X$. Let $O \in \overrightarrow{\mathcal{G}}$. Since $\varnothing \neq Y \varsubsetneq X$, we have $x \notin \langle X \rangle_G$, and hence $x \notin \langle X \rangle_{O}$ (see (\ref{eqmodGGfleche})). Moreover, if $x \in X_{O}(u)$ for some $u \in X$, then since $xu$ is a module of $O$ and thus of $G$ (see (\ref{eqmodGGfleche})), and since $G$ is sduo-free, we have $u \in Y$. It follows from (\ref{eqpGX}) that
 \begin{equation} \label{eqq}
 \text{given} \ O \in \overrightarrow{\mathcal{G},} \ \text{if} \ O \ \text{is decomposable}, \ \text{then} \ x \in X_{\overrightarrow{G}}(u) \ \text{for some} \ u \in Y. 
 \end{equation}
  Now let $\overrightarrow{G}$ be the element of $\overrightarrow{\mathcal{G}}$ such that $\overrightarrow{G}(x,Y)= 1$. If $\overrightarrow{G}$ is prime, then we are done. So suppose that $\overrightarrow{G}$ is decomposable. By (\ref{eqq}), there exists $u \in Y$ such that $x \in X_{\overrightarrow{G}}(u)$. If $Y = \{u\}$, then $G(x, \overline{xu}) =0$, and hence $G(u, \overline{xu}) =0$ because $xu$ is a module of $\overrightarrow{G}$ and thus of $G$ (see (\ref{eqmodGGfleche})). It follows that $xu$ is a connected component of $G$, which contradicts that $G$ is connected. Thus $\{u\} \varsubsetneq Y \varsubsetneq X$. Fix $y \in Y \setminus \{u\}$, and consider the orientation $\overrightarrow{G}'$ of $G$ obtained from $\overrightarrow{G}$ by reversing the arc $\overrightarrow{xy}$ of $\overrightarrow{G}$, that is, $\overrightarrow{G}' = (V(G), (A(\overrightarrow{G}) \setminus \{\overrightarrow{xy}\}) \cup \{\overrightarrow{yx}\})$. 
   For every $v \in Y \setminus \{u\}$, we have $\overrightarrow{G}(u,v) = 1$ because $\overrightarrow{G}(x,v) = 1$ and $xu$ is a module of $\overrightarrow{G}$. Thus $\overrightarrow{G'}(u, Y \setminus \{u\}) = \overrightarrow{G'}(x, Y \setminus \{y\}) = \overrightarrow{G'}(y,x)=1$. Therefore, for every $z \in Y$, we have $x \notin Y_{\overrightarrow{G'}}(z)$. Since $\overrightarrow{G'} \in \overrightarrow{\mathcal{G}}$, it follows from (\ref{eqq}) that $\overrightarrow{G'}$ is prime, which completes the proof.     
\end{proof}

We are now ready to prove our main result. We need the following notation.
\begin{Notation} \normalfont
	Given an oriented graph $O$, for every vertex $v$ of $O$, the vertex set $\{x \in V(O) : \overrightarrow{vx} \in A(O)\}$ is denoted by $N_O^+(v)$.
\end{Notation}            
  
\begin{proof}[Proof of Theorem \ref{thmprincipal}]
	Let $G$ be a graph with at least three vertices, other than $K_4$. To begin, suppose that the graph $G$ is primarily orientable. The graph $G$ is connected (see (\ref{eqconnecprimorient})). Now let $S$ be a s-module of $G$. Consider a prime orientation $\overrightarrow{G}$ of $G$, and consider the function
	\begin{equation*}
	\begin{array}{rccl}
	f : &S&\longrightarrow&2^{N_G(S)}\\
	&x&\longmapsto& f(x)= N_{\overrightarrow{G}}^+(x).
	\end{array}
	\end{equation*}
	Given distinct $x, y \in S$, if $f(x) = f(y)$, then $xy$ is a nontrivial module of the prime oriented graph $\overrightarrow{G}$, a contradiction. Thus $f(x) \neq f(y)$, and hence $f$ is injective. It follows that $|S| \leq |2^{N_G(S)}| = 2^{d_G(S)}$, that is, $d_G(S) \geq \log_2(|S|)$. 
	
	Conversely, suppose that $G$ is connected and that 
	\begin{equation} \label{eqthm}
	d_G(M) \geq \log_2(|M|) \ \text{for every s-module} \ M \ \text{of} \ G. 
	\end{equation}
	When $v(G) = 3$ or $4$, the graph $G$ is primarily orientable by Fact~\ref{factsize34}. So suppose $v(G) \geq 5$. To prove that $G$ is primarily orientable, we proceed by induction on $|\text{sduo}(G)|$. If $|\text{sduo}(G)|=0$, i.e. $G$ is sduo-free, then $G$ is primely orientable by Proposition~\ref{propsduofree}. Now suppose $|\text{sduo}(G)| \geq 1$. Let $S$ be a s-module of $G$ which is of maximum size. Since $|S| \geq 2$ by the definition of a s-module, and since $S \neq V(G)$ because $G$ is connected, then the module $S$ of $G$ is nontrivial. Fix $x \in S$, and consider the graph $G-x$. By Lemmas~\ref{lemrem} and \ref{leminclustrict}, $G-x$ is connected and $\text{sduo}(G-x) \varsubsetneq \text{sduo}(G)$. If $G-x$ is $K_4$, the graph $G$ is primarily orientable by Fact~\ref{fact5verices}. Hence suppose that $G-x$ is not $K_4$. Since $G-x$ is connected, and $|\text{sduo}(G-x)| < |\text{sduo}(G)|$ because $\text{sduo}(G-x) \varsubsetneq \text{sduo}(G)$, to apply the induction hypothesis to $G-x$, it only remains to verify that 
	\begin{equation} \label{eqthm2}
	d_{G-x}(M) \geq \log_2(|M|) \ \text{for every s-module} \ M \ \text{of} \ G-x. 
	\end{equation}
	Let $M$ be a s-module of $G-x$. If $M \cap S \neq \varnothing$ and $M \setminus S \neq \varnothing$, then it is easy to see that $M \cup S$ is a s-module of $G$, which contradicts the maximality of the s-module $S$ of $G$. Therefore, $M \cap S = \varnothing$ or $M \subseteq S \setminus \{x\}$. First suppose $M \subseteq S \setminus \{x\}$. In this instance, $N_G(M) = N_G(S)$ and hence $d_{G-x}(M) = d_{G}(S)$. Since $d_{G}(S) \geq \log_2(|S|)$, it follows that $d_{G-x}(M) \geq \log_2(|S|) > \log_2(|M|)$. Second suppose $M \cap S = \varnothing$. Since $M$ is a s-module of $G-x$, to show that $M$ is also a s-module of $G$, it suffices to verify that for every $m,m' \in M$, we have $G(x,m) = G(x,m')$. Let $m,m' \in M$, and consider $s \in S \setminus \{x\}$. We have $G(x,m) = G(s,m)$ and $G(x,m') = G(s,m')$ because $S$ is a module of $G$. Moreover, $G(s,m) = G(s,m')$ because $M$ is a module of $G-x$. Thus $G(x,m) = G(x,m')$, and hence $M$ is a s-module of $G$. It follows that $d_G(M) \geq \log_2(|M|)$ (see (\ref{eqthm})). Moreover, we have $G(x,M) =0$ or $G(x,M) =1$. In the first instance, we have $d_G(M) = d_{G-x}(M)$, and since $d_G(M) \geq \log_2(|M|)$, we obtain $d_{G-x}(M) \geq \log_2(|M|)$. In the second instance, since $M$ and $S$ are disjoint modules of $G$, we have $G(M,S)=1$ so that $d_G(M) \geq |S|$, and hence $d_{G-x}(M) \geq |S|-1$. Since $|S| \geq |M|$ by maximality of $S$, it follows that $d_{G-x}(M) \geq |M|-1 \geq \log_2(|M|)$. Thus (\ref{eqthm2}) holds.
	
	Now by the induction hypothesis applied to $G-x$, the graph $G-x$ is primarily orientable. Let $\overrightarrow{G-x}$ be a prime orientation of $G$, and consider the function 
	\begin{equation*}
	\begin{array}{rccl}
	g : &S \setminus \{x\}&\longrightarrow&2^{N_G(S)}\\
	&v&\longmapsto& g(v)= N_{\overrightarrow{G-x}}^+(v).
	\end{array}
	\end{equation*}
	Since $|S \setminus \{x\}| < 2^{d_G(S)}$ because $d_G(S) \geq \log_2(|S|)$, and since $|2^{N_G(S)}| = 2^{d_G(S)}$, we have $|S \setminus \{x\}| < |2^{N_G(S)}|$. Therefore, the function $g$ is not surjective. It follows that $2^{N_G(S)} \setminus \{g(v) : v \in S \setminus \{x\}\}) \neq \varnothing$. So let $Y \in 2^{N_G(S)} \setminus \{g(v) : v \in S \setminus \{x\}\}$, and consider the orientation $\overrightarrow{G}$ of $G$ defined by $\overrightarrow{G}-x = \overrightarrow{G-x}$ and $N_{\overrightarrow{G}}^+(x) = Y$. Set $X = \overline{\{x\}}$. Since $x \notin \langle X \rangle_G$ because $G(x, S \setminus \{x\}) = 0 \neq G(x, N_G(S)) =1$, then $x \notin \langle X \rangle_{\overrightarrow{G}}$ (see (\ref{eqmodGGfleche})). Suppose for a contradiction that $\overrightarrow{G}$ is decomposable. Since $x \notin \langle X \rangle_{\overrightarrow{G}}$, there exists $u \in X$ such that $x \in X_{\overrightarrow{G}}(u)$ (see (\ref{eqpGX})). If $u \in S \setminus \{x\}$, then since $xu$ is a module of $\overrightarrow{G}$ and $\overrightarrow{G}(x,u)=0$, we have $N_{\overrightarrow{G}}^+(x) = N_{\overrightarrow{G}}^+(u) = N_{\overrightarrow{G}-x}^+(u) = g(u)$, which contradicts $N_{\overrightarrow{G}}^+(x) = Y \notin \{g(v) : v \in S \setminus \{x\}\}$. Therefore $u \notin S$. Moreover, if $u \in N_G(S)$, then for $s \in S \setminus \{x\}$, we have $G(s,x) =0 \neq G(s,u) =1$, contradicting that $xu$ is a module of $\overrightarrow{G}$ and thus of $G$ (see (\ref{eqmodGGfleche})). Thus $u \in X \setminus (S \cup N_G(S))$. It follows that $xu$ is a s-duo of $G$. Since $S$ and $xu$ are non-disjoint s-modules of $G$, then $S \cup xu = S \cup \{u\}$ is also a s-module of $G$ (see Remark~\ref{remunionsmodules}), which contradicts the maximality of $S$. Thus $\overrightarrow{G}$ is prime, completing the proof.           
\end{proof}  
  
 \section*{References}
 {}
 

\begin{thebibliography}{}
 	
 	\bibitem{HB} H. Belkhechine, Decomposability index of tournaments, Discrete Math. 340 (2017) 2986--2994. 
 	
 	\bibitem{HC} H. Belkhechine, C. Ben Salha, Decomposability and co-modular indices of
 	tournaments, http://arxiv.org/abs/2003.06503, 2020.
 	
 	\bibitem{Boesch} F. Boesch, R. Tindell, Robbins’s theorem for mixed multigraphs, Amer. Math. Monthly 87 (1980) 716--719.
 	
 	\bibitem{Bondy} J.A. Bondy, U.S.R. Murty, Graph Theory, Graduate Texts in Mathematics, vol. 244, Springer, 2008.
 	
 	\bibitem{BI09} Y. Boudabous, P. Ille, Indecomposability graph and critical vertices of an indecomposable graph, Discrete Math. 309 (2009) 2839--2846.
 	
 	\bibitem{CH93} A. Cournier, M. Habib, An efficient algorithm to recognize prime undirected graphs, in: E. Mayer (Ed.), Graph-Theoretic Concepts in Computer Science, in: Lecture Notes in Computer Science, vol. 657, Springer, 1992, pp. 212--224.
 	
 	\bibitem{ER} A. Ehrenfeucht, T. Harju, G. Rozenberg, The Theory of 2-Structures, A
 	Framework for Decomposition and Transformation of Graphs, World Sci-
 	entific, Singapore, 1999.
 	
 	\bibitem{Erdos} P. Erd\H{o}s, E. Fried, A. Hajnal, E.C. Milner, Some remarks on simple tournaments, Algebra Universalis 2 (1972) 238--245.
 	
 	\bibitem{EH} P. Erd\H{o}s, A. Hajnal, Chromatic number of finite and infinite graphs and
 	hypergraphs, Discrete Math. 53 (1985) 281--285.
 	
 	\bibitem{G67} T. Gallai, Transitiv orientierbare Graphen, Acta Math. Acad. Sci. Hungar. 18 (1967) 25--66. 
 	
 	\bibitem{Ghouila} A. Ghouila-Houri, Caracterisation des graphes non orientés dont on peut orienter les
 	arêtes de manière à obtenir le graphe d'une relation d'ordre, C.R. Math. Acad. Sci. Paris 254
 	(1962) 1370--1371.
 	
 	\bibitem{MP01} F. Maffray, M. Preissmann, A translation of Tibor Gallai's paper: Transitiv orientierbare Graphen. In Perfect Graphs J.L. Ramirez-Alfonsin and B.A. Reed (Ed.), J. Wiley, New York, 2001, pp. 25--66.
 	
 	\bibitem{Nash} C. St. J. A. Nash-Williams, On orientations, connectivity and odd-vertex-pairings in finite graphs, Canad. J. Math. 12 (1960) 555--567.
 	
 	\bibitem{QW} Hao Qi, Tsai-Lien Wong, Xuding Zhu, Chromatic number and orientations of graphs and signed graphs, Taiwanese J. Math. 23 (2019) 767--776. 
 	
 	\bibitem{Robbins} H.E. Robbins, Questions, discussions, and notes: a theorem on graphs, with an application to a problem of traffic control, Amer. Math. Monthly 46 (1939) 281--283.
 	
 	\bibitem{ST} J.H. Schmerl, W.T. Trotter, Critically indecomposable partially ordered sets, graphs, tournaments and other binary relational structures, Discrete Math. 113 (1993) 191--205.
 	
 	\bibitem{S92} J. Spinrad, $P_{4}$-trees and substitution decomposition, Discrete Appl. Math. 39 (1992) 263--291.
 	
 \end{thebibliography}
\end{document}